\def \eps {\varepsilon}
\def \hsp {\hspace{0.2 in}}
\DeclareMathOperator{\Ric}{Ric}
\DeclareMathOperator{\Hess}{Hess}
\DeclareMathOperator{\diam}{diam}
\DeclareMathOperator{\vol}{vol}
\DeclareMathOperator{\diver}{div}
\DeclareMathOperator{\SecFun}{II}
\DeclareMathOperator{\II}{II}
\DeclareMathOperator{\SN}{SN}
\newcommand{\lb}{\overline{\lambda}}
\title{Sharp Spectral Gap Estimates on Manifolds under Integral Ricci Curvature Bounds}
\author{Xavier Ramos Oliv\'e}
\address{Department of Mathematics and Computer Science, College of the Holy Cross, Worcester, MA 01610}
\email{\href{mailto:xramos-olive@holycross.edu}{ xramos-olive@holycross.edu}}
\thanks{X.R.O. is partially supported by an AMS-Simons PUI Grant.}
\author{Shoo Seto}
\address{Department of Mathematics\\
         California State University, Fullerton\\
         Fullerton, CA 92831}
\email{\href{mailto:shoseto@fullerton.edu}{shoseto@fullerton.edu}}
\thanks{S.S. is partially supported by an AMS-Simons PUI Grant.}
\author{Malik Tuerkoen}
\address{Department of Mathematics\\
       University of California, Irvine\\
         Irvine, CA 92697}
\email{\href{mailto:mtuerkoe@uci.edu}{mtuerkoe@uci.edu}}
\thanks{M.T. is partially supported by an AMS-Simons Travel Grant}
\subjclass{58C40, 35P15}
\date{}
\theoremstyle{definition} 
\newtheorem{theorem}{Theorem}[section]
\newtheorem{definition}[theorem]{Definition}
\newtheorem{lemma}[theorem]{Lemma}
\newtheorem{remark}[theorem]{Remark}
\newtheorem{proposition}[theorem]{Proposition}
\newtheorem{corollary}[theorem]{Corollary}
\newtheorem{thmx}{Theorem}
\begin{document}
\begin{abstract}
    We prove sharp spectral gap estimates on compact manifolds with integral curvature bounds. We generalize the results assuming a lower Ricci curvature bounds to the integral curvature setting. In doing so, we confirm a conjecture raised in \cite{ramos2020zhong}. This completes the picture of lower bounds on the spectrum of the Laplacian, assuming an integral curvature bound.
\end{abstract}
\maketitle
\vspace{-0.5 cm}

\section{Introduction}
Let $M$ be an $n$-dimensional compact Riemannian manifold, possibly with nonempty convex and $C^2$ boundary.  We will consider the eigenvalue problem of the Laplacian on $M$,
\begin{align*}
    \Delta u +\lambda u =0 \quad \textup{in }M,
\end{align*}
 imposing Neumann boundary conditions (vanishing of the normal derivative) in the case of nonempty boundary. It is easily seen that the eigenvalues are nonnegative and that $\lambda_0=0$ is simple. By spectral theory of compact self-adjoint operators, there exists a sequence of eigenvalues 
\begin{align*}
    \lambda_0 = 0 < \lambda _1 \leq \lambda_2 \leq \dots \rightarrow +\infty.
\end{align*}
The study of the eigenvalues (or spectrum) has been an active area of research in geometric analysis and here we will focus on the first nonzero eigenvalue $\lambda_1$, in particular, on obtaining a sharp quantitative lower bound in terms of geometric data.  Lichnerowicz \cite{lichnerowicz1958geometrie} showed that for compact manifolds with a positive Ricci curvature lower bound  the first nonzero eigenvalue is bounded below by the first nonzero eigenvalue on an $n$-sphere with radius matching the Ricci curvature bound, and Zhong and Yang \cite{Zhong1984ONTE} for compact manifolds with nonnegative Ricci curvature, the first nonzero eigenvalue is bounded below by $\frac{\pi^2}{D^2}$ where $D$ is the diameter of the manifold. These results can be summarized as
\begin{equation*}
    \lambda_1(M) \geq \begin{cases}
        \lambda_1(S^n(1/\sqrt{K}))=nK & \text{ for } \Ric\geq(n-1)K>0,\\
        \lambda_1(S^1(D/\pi))=\frac{\pi^2}{D^2} &\text{ for }\Ric \geq 0.
    \end{cases}
\end{equation*}
The above two estimates have been unified by Bakry and Qian \cite{bakry2000some} (see also the earlier work by Kröger \cite{kroger1992spectral}) in the following form:
 \begin{thmx} \label{thm: bakryqian}
    Let $M$ be an $n$-dimensional compact Riemannian manifold (with possibly non-empty convex and $C^2$ boundary), $\Ric\geq (n-1)K$, $K\in \mathbb{R}$ and $ \diam(M)\leq D$.  Let $\lambda_1(M)$ be the first nonzero eigenvalue of the Laplacian.  Then
    \begin{align}\label{ineq: kroger estimate}
        \lambda_1(M)\geq \lambda_1(n,K,D),
    \end{align}
    where $\lambda_1(n,K,D)$ is the first nonzero eigenvalue of the one-dimensional eigenvalue problem
    \begin{equation}\label{model: kroger neumann}
        \begin{cases}
            w''-T_{n , K}w'+\lambda w=0 &\textup{ on } [-\frac{D}{2}, \frac{D}{2}]\\
            w'(\pm\frac{D}{2})=0,
        \end{cases}
    \end{equation}
    where $T_{n,K}$ is given by 
    \begin{align}\label{def: of T}
        T_{n,K}(x) = \begin{cases}(n-1)\sqrt{K}\tan\left(\sqrt{K}x\right), \quad\,\,\,\,\,&\textup{ if }K >0,\\
        0,\quad\quad\,\quad\quad\quad\quad \quad\quad\,&\textup{ if }K =0,\\-(n-1)\sqrt{-K}\tanh\left(\sqrt{-K}x\right),& \textup{ if }K <0.
        \end{cases}
    \end{align}
\end{thmx}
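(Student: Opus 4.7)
The plan is to follow the gradient-comparison method of Kröger and Bakry--Qian. Let $u$ be a first nonzero Neumann eigenfunction on $M$ with eigenvalue $\lambda=\lambda_1(M)$. After a sign flip and rescaling we may assume $\max_M u=1$ and $\min_M u=-k$ for some $k\in(0,1]$. The strategy is to produce a pointwise gradient estimate comparing $|\nabla u|$ against the derivative of a suitable one-dimensional model solution, and then integrate it along a diameter-realizing geodesic.

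First, I set up the one-parameter family of 1-D comparison problems: for each interval $[a,b]$ of length at most $D$ and each $k\in(0,1]$, let $\phi$ solve
\begin{equation*}
    \phi''-T_{n,K}\phi'+\mu\phi=0,\qquad \phi'(a)=\phi'(b)=0,
\end{equation*}
with $\phi(a)=-k$ and $\max\phi=1$; this determines a unique eigenvalue $\mu=\mu(a,b,k)$, and a scaling/monotonicity argument shows $\mu(a,b,k)\ge\lambda_1(n,K,D)$ whenever $b-a\le D$. It therefore suffices to exhibit an admissible triple with $\mu\le\lambda$.

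The heart of the argument is the pointwise gradient estimate
\begin{equation*}
    |\nabla u|(x)\;\le\;\phi'\bigl(\phi^{-1}(u(x))\bigr),\qquad x\in M.
\end{equation*}
I would prove this by applying the maximum principle to $F:=|\nabla u|^2-\bigl[\phi'(\phi^{-1}(u))\bigr]^2$. The Bochner formula
\begin{equation*}
    \tfrac12\Delta|\nabla u|^2=|\Hess u|^2+\langle\nabla u,\nabla\Delta u\rangle+\Ric(\nabla u,\nabla u),
\end{equation*}
combined with $\Ric\ge(n-1)K$, the refined Kato-type inequality controlling $|\Hess u|^2$ by its radial part plus $(\Delta u)^2/n$, and the eigenvalue equation $\Delta u=-\lambda u$, leads at an interior maximum of $F$ to an elliptic inequality that forces $\lambda\ge\mu$. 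The boundary case is ruled out by convexity of $\partial M$ together with the Neumann condition through a Reilly/Hopf-type argument (using $\partial_\nu|\nabla u|^2\le 0$ via the second fundamental form on normal-derivative-vanishing functions).

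Once the gradient estimate is established, let $\gamma:[0,L]\to M$ with $L\le D$ be a minimizing geodesic connecting a minimum point of $u$ to a maximum point. Since $(u\circ\gamma)'(t)\le|\nabla u|(\gamma(t))\le\phi'(\phi^{-1}(u(\gamma(t))))$, separating variables yields
\begin{equation*}
    b-a\;=\;\int_{-k}^{1}\frac{ds}{\phi'(\phi^{-1}(s))}\;\le\;L\;\le\;D,
\end{equation*}
so $\phi$ lives on an interval of length at most $D$ and monotonicity gives $\mu\ge\lambda_1(n,K,D)$; combined with $\lambda\ge\mu$ this completes the argument.

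\textbf{Main obstacle.} The delicate step is the Bochner/maximum-principle argument producing the gradient estimate. One must choose the ODE parameters $(a,b,k)$ so that the comparison actually majorizes $u$ at both extrema, verify that the ratio $|\nabla u|^2/\bigl[\phi'(\phi^{-1}(u))\bigr]^2$ attains its supremum at an interior regular point rather than at a critical point of $u$ where the comparison degenerates, and assemble the Bochner terms so that the non-radial Hessian contributions cancel against the structure of the ODE satisfied by $\phi$ (which is precisely what forces the coefficient $T_{n,K}$ to take the $\tan$/$\tanh$ form dictated by the $n$-dimensional Laplacian on a one-dimensional warped model). The boundary analysis via convex Neumann geometry is standard but requires careful bookkeeping.
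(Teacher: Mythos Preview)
The paper does not give its own proof of Theorem~A; it is quoted as the classical Kr\"oger/Bakry--Qian result, and the paper's contribution is the integral-curvature generalization (Theorem~1.1). Your outline is exactly the Kr\"oger/Bakry--Qian template, and it coincides with what the paper does for its generalization when one sets $J\equiv 1$ and $\delta=0$: gradient comparison via Bochner plus the refined Hessian inequality (the paper's Theorem~3.1), the boundary case handled by convexity and the Neumann condition (the paper's boundary lemma), maximum matching (Section~4), and the diameter comparison by integrating the gradient bound along a geodesic between extremal points (Section~5, together with the central-interval monotonicity of Proposition~2.4).

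Two points worth tightening. First, the phrase ``forces $\lambda\ge\mu$'' is not quite how the argument runs: one takes the one-dimensional model with eigenvalue equal to $\lambda$, proves the gradient comparison for that model, and only then the geodesic integration yields $b-a\le D$; domain monotonicity and the central-interval comparison then give $\lambda\ge\lambda_1(n,K,D)$. Your $(a,b,k,\mu)$ parametrization is equivalent, but the sentence as written conflates two distinct steps. Second, the maximum-matching step you flag as the main obstacle is more than bookkeeping. One must show that among the admissible one-dimensional models with eigenvalue $\lambda$ there is one whose maximum equals $u^\star$; this requires a genuine volume/measure comparison (the paper's Theorem~4.1 and Proposition~4.2, following Bakry--Qian) to rule out $u^\star$ being too small, and for $K<0$ one must allow not only the $\tanh$ model $T_{n,K}$ but also the $\coth$ branch $\overline T_{n,K}$ of the Riccati equation to cover the full range of possible $u^\star$. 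Without this, $\phi^{-1}(u)$ may fail to be defined at the extrema and the gradient comparison becomes vacuous.
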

Andrews and Clutterbuck proved the same estimate using a two-point maximum principle \cite{andrews2013sharp}.
\begin{remark}
    The above theorem unifies the results of Lichnerowicz and Zhong-Yang as it can be directly computed that $\lambda_1(n,0,D)=\frac{\pi^2}{D^2}$ and $\lambda_1(n,K,\frac{\pi}{\sqrt{K}})=nK$ when $K>0$.  In fact, we have an almost interpolation given by Shi and Zhang \cite{shi2007lower}:
    \begin{align*}
        \lambda_1(n,K,D) \geq \max_{s \in (0,1)} 4(s-s^2)\frac{\pi^2}{D^2}+s(n-1)K.
    \end{align*}
    For the case $K < 0,$ Yang \cite{yang1990_estimate_eigenvalue} derived the lower bound \begin{align}
        \label{ineq: yang estimate} \lambda_1(M) 
    \geq \frac{\pi^2}{D^2}\exp\left(-c_nD\sqrt{(n-1)|K|}\right)
    \end{align} where $c_n =\max\{2, n-1\}.$ 
\end{remark}

In this paper we generalize Theorem \ref{thm: bakryqian} to the setting of integral Ricci curvature.  To this end, we let $\rho(x) $ be the smallest eigenvalue of the Ricci tensor at $x\in M$ and for a constant $K \in \mathbb R,$ we let $\rho_K(x)$ be the amount of Ricci curvature below $(n-1)K$ at $x$, that is, 
\begin{align}\label{eq: def of rho k}
 \rho_K(x) = \max\left\{-(\rho(x)-(n-1)K), 0\right\}.
\end{align}
We measure the amount of Ricci curvature below $(n-1)K$ in an $L^p$ sense with the following quantity 
\begin{align}\label{eq: def of k bar}
    \kappa(p,K) = \left(\frac{1}{\vol(M)}\int_M\rho_K^p\, d\textup{V}\right)^{\tfrac{1}{p}}.
\end{align}
Note that $\kappa(p,K)=0$ if and only if $\Ric\geq (n-1)K$.  Many classical results of geometric analysis under a pointwise Ricci lower bound have been generalized to integral curvature, c.f. \cite{petersen1997relative}, and for the weighted case in \cite{wu}. The Lichnerowicz estimate with control on integral curvature has been established by Aubry \cite{aubry2007finiteness}. In fact, Aubry showed that for $K>0$ 
\begin{align}\label{ineq: aubry}
    \lambda _1(M) \geq nK\left(1-C(n,p)\kappa(p,K)\right).
\end{align}
The Zhong-Yang estimate (i.e. the case $\Ric \geq 0$) has been generalized to the integral curvature setting. In fact, this was established by the first two authors along with G. Wei and Q.S. Zhang \cite{ramos2020zhong}. C. Rose and G. Wei also generalized the Zhong-Yang estimate to the closely related Kato-type condition \cite{rose-wei}. In \cite{ramos2020zhong} it was conjectured that an integral curvature version of \eqref{ineq: kroger estimate} holds. In the present article, we confirm this conjecture and generalize the results of Kr\"oger and Bakry-Qian to the integral curvature setting. This completes the picture of lower bounds of eigenvalues under integral Ricci curvature conditions. Our main theorem reads as follows.
\begin{theorem}\label{thm: main theorem}
Let $M$ be an $n$-dimensional compact Riemannian manifold (possibly with non-empty convex and $C^2$ boundary), $n\geq 2$, and $\diam(M)\leq D$. Let $\lambda_1(M)$ be the first non-trivial eigenvalue of the Laplacian and let $p > \tfrac{n}{2}$. Then for any $\alpha \in (0,1)$ there exists $\varepsilon_0 = \varepsilon_0(n,p,\alpha,D)>0$ such that whenever $\kappa(p,K) < \varepsilon_0,$ one has that
\begin{align*}
\lambda_1(M)\geq  \alpha{\lambda_1}(n,K,D),
\end{align*}
where $\lambda_1(n,K,D)$ is the first non-trivial eigenvalue of the one-dimensional eigenvalue problem \eqref{model: kroger neumann}.
\end{theorem}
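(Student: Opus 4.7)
The plan is to adapt the gradient comparison technique of Bakry--Qian to the integral Ricci curvature setting, following the strategy developed in \cite{ramos2020zhong} for the Zhong--Yang case. Let $\phi$ be a first eigenfunction on $M$, normalized so that $\max \phi = 1$ and $\min \phi \geq -1$. Let $w$ be the monotone first Neumann eigenfunction of the one-dimensional model \eqref{model: kroger neumann} on an appropriate subinterval of $[-D/2,D/2]$, with extrema matched to those of $\phi$. The target pointwise comparison is $|\nabla \phi|^2 \leq \Psi(\phi)$, where $\Psi := (w' \circ w^{-1})^2$, since integration of this inequality along a curve connecting the extrema of $\phi$ gives $\lambda_1(M) \geq \lambda_1(n,K,D)$ in the classical Ricci-lower-bound case.

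The second step is to set $F(x) := |\nabla \phi|^2(x) - \Psi(\phi(x))$ and to derive a differential inequality for $F$. Using the Bochner formula, the refined Cauchy--Schwarz inequality
\begin{equation*}
|\nabla^2 \phi|^2 \geq \phi_{11}^2 + \tfrac{1}{n-1}\bigl(\lambda_1 \phi + \phi_{11}\bigr)^2,
\end{equation*}
and the ODE satisfied by $w$, the standard Bakry--Qian algebraic cancellations produce an inequality of the form
\begin{equation*}
\Delta F + \langle X, \nabla F \rangle \geq -2\, \rho_K(x)\, |\nabla \phi|^2 + c(x)\, F(x),
\end{equation*}
where $X$ is an explicit drift depending on $\phi$, $w$, and their derivatives, and $c(x) \geq 0$ at points where $F \geq 0$. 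In the pointwise case $\rho_K \equiv 0$, the maximum principle (together with the Hopf lemma on the convex Neumann boundary) yields $F \leq 0$ on $M$.

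Under only the integral bound $\overline{k}(p,K) < \varepsilon_0$, the pointwise maximum principle must be replaced by a Nash--Moser iteration on $F^+$. Multiplying the differential inequality by $(F^+)^{q-1}$, integrating against the drift-adjusted volume measure, and applying the Sobolev inequality of Dai--Wei--Zhang valid under small integral Ricci (which uses $n \geq 3$ and $p > n/2$), one obtains
\begin{equation*}
\sup_M F^+ \leq C(n,p,K,D)\, \overline{k}(p,K)^{\beta}
\end{equation*}
for some $\beta = \beta(n,p) > 0$, provided $\varepsilon_0$ is small. Combining $|\nabla \phi|^2 \leq \Psi(\phi) + \sup_M F^+$ with integration along a curve joining a minimum to a maximum of $\phi$, and arguing as in the pointwise proof, yields $\lambda_1(M) \geq \lambda_1(n,K,D) - \eta(\overline{k}(p,K))$ with $\eta \to 0^+$, from which any prescribed factor $\alpha \in (0,1)$ is achieved by choosing $\varepsilon_0$ small enough.

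The main obstacle will be the Moser iteration. The coefficients in the inequality for $F^+$ involve $w$, $w'$, $w''$ composed with $\phi$, and the derivative $(w' \circ w^{-1})'$ is singular at the endpoints of the model interval where $w'$ vanishes; handling this requires either a cutoff of $\phi$ near its extrema with a controlled error, or a weighted test function tailored to the drift. In addition, the Sobolev constant and the boundary terms arising in the iteration must be shown to depend only on $n$, $p$, $K$, $D$, which follows from the volume comparison theory of Petersen--Wei under small integral curvature together with the convexity of $\partial M$ via a Reilly-type boundary identity.
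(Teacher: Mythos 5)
Your high-level plan (gradient comparison, then integrate along a geodesic joining extrema) matches the paper's, but the way you propose to obtain the gradient comparison is genuinely different from what the authors do, and that is where the gap lies.

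The paper does not run a Moser iteration on $F^+ = \bigl(|\nabla\phi|^2-\Psi(\phi)\bigr)^+$. Instead it follows the auxiliary-function method of Zhang--Zhu (the very approach conjectured in \cite{ramos2020zhong}): one first solves the auxiliary elliptic problem
\[
\Delta J - \tau\frac{|\nabla J|^2}{J} - 2J\rho_K = -\sigma J,
\]
which under $\bar k(p,K)<\varepsilon$ admits a solution with $|J-1|\leq\delta$ and $0\leq\sigma\leq 4\varepsilon$ (Lemma \ref{lem: J}). One then proves the \emph{pointwise} comparison $J|\nabla u|^2\leq (w')^2\circ w^{-1}(u)$ by a maximum-principle contradiction on $Q=J|\nabla u|^2-(cw')^2\circ(cw)^{-1}(u)$, using the equation for $J$ to absorb the $\rho_K|\nabla u|^2$ term exactly, and using perturbed model parameters $N>n$, $\overline K<K$, $\overline\lambda=(1+2\delta)\lambda_1$ to make the remaining coefficients nonnegative. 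This sidesteps the issues you flag in your last paragraph.

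There are three concrete problems with your route as written. First, the differential inequality $\Delta F + \langle X,\nabla F\rangle \geq -2\rho_K|\nabla\phi|^2 + cF$ is not an immediate consequence of Bochner plus the one-dimensional ODE: the Bakry--Qian cancellations are performed \emph{at a maximum} of $Q$ using the first-order condition $\nabla Q=0$ to eliminate the Hessian term $A_u$; without that condition the $A_u$-dependence does not reduce to a clean drift in $\nabla F$, and the sign of $c$ at generic points where $F\geq 0$ is not established. Second, you observe that the drift $X$ degenerates near the level sets where $w'$ vanishes and only sketch possible remedies (cutoff or weighted test functions) without showing either works; this is precisely the difficulty the auxiliary function $J$ is designed to avoid, since after introducing $J$ the entire argument is local at $\overline x$ and never needs global $L^q$ control of $F^+$. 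Third, even granting a bound $\sup F^+\leq C\bar k^\beta$, your proof stops at the diameter/integration step and never addresses the \emph{maximum matching}: to extract a sharp lower bound one must show that the model $w$ can be chosen with $\max_I w=\max_M u$ and $\min_I w=\min_M u$, which is the content of the paper's Theorem \ref{thm: maximum matching theorem} and requires the separate maximum-comparison argument of Theorem \ref{thm: maximum comparison} (itself based on the measure $m$, the monotone functional $E$, and a ball-volume estimate). Without that step the integration argument does not pin down $\lambda_1(n,K,D)$ with the correct constants. Finally, you do not introduce the perturbed model parameters $N>n$, $\overline K<K$, $\overline\lambda>\lambda_1$ that give the ``room'' needed for the coefficients to be strictly positive; in the unperturbed case these coefficients are only nonnegative and there is nothing left over to absorb the curvature error.
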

To prove this theorem one shows  a gradient comparison, that is, we aim to prove an estimate of the type 
\[|\nabla u|^2 \leq (w')^2\circ w^{-1}(u(x)),\]
where $u$ is the eigenfunction associated with $\lambda_1(M)$ and $w$ is the eigenfunction associated to $\lambda_1(n,K,D).$  This approach was introduced by Kröger \cite{kroger1992spectral} and also used by Bakry and Qian \cite{bakry2000some}. Valtorta \cite{VALTORTA20124974} extended this to the $p$-Laplacian framework (see also the work by Naber and Valtorta in \cite{naber2014sharp}). We extend the gradient comparison to the integral curvature setting. Our approach uses the technique introduced by Zhang and Zhu in \cite{zhang2017li}, which was successfully applied in \cite{ramos2020zhong}. The key idea here is to introduce an auxiliary function $J$ that absorbs critical terms in the maximum principle calculation (see Definition \ref{def: of J}). One of the key difficulties in our proof here is that one has to perturb parameters of the one-dimensional model to obtain a gradient comparison. These parameters will be close to the parameters that are used in the pointwise lower bound case. Finally, we will be able to obtain a comparison to the eigenvalue of the model \eqref{model: kroger neumann} with the \textit{correct} parameters since $\lambda_1(n,K,D)$ is a continuous function with respect to $n, K,$ and $D$, i.e., our estimate is sharp in the sense that it recovers Theorem \ref{thm: bakryqian} in the limit where ${\rm Ric} \geq (n-1)K$. Note that for the case $K<0, $ under the assumptions of Theorem \ref{thm: main theorem}, we get an integral curvature version of  \eqref{ineq: yang estimate}. In fact, for any $\alpha \in (0,1),$ $p> \tfrac{n}{2}, $ there exists $\varepsilon >0$ such that whenever $\kappa(p,K) < \varepsilon$ then one has that
\begin{equation*}
\lambda_1 \geq \alpha\frac{\pi^2}{D^2}\exp\left(-c_nD\sqrt{(n-1)|K|}\right).
\end{equation*}

\subsection{Overview of the Paper} In Section \ref{sec: not and prelims}, we fix our notation and recall some results from previous work that will be of significance throughout this article.
In Section \ref{sec: grad comparison}, we prove the key theorem, namely a gradient comparison theorem. The main difficulty there is to circumvent the need for a lower bound on the curvature, which we overcome by introducing the auxiliary function $J$ which absorbs the integral curvature terms (see Lemma \ref{lem: J}). In Section \ref{sec: maximum comparison}, we show that there exists a one-dimensional model, whose maximum and minimum are the same as the first eigenfunction on the manifold. This is crucial to obtain a sharp spectral gap comparison. In Section \ref{sec: diameter comparison}, we give the proof of Theorem \ref{thm: main theorem}, via a diameter comparison along with the monotonicity properties of $\lambda_1(n,K,D)$ as a function of $D.$

\subsection*{Acknowledgments}
The authors wish to thank Guofang Wei for helpful discussions.

\section{Notation and Preliminaries}\label{sec: not and prelims}
We first introduce some notations and mention important results of previous works that will be important to us throughout this work. We let $M$ be a $n$-dimensional manifold and denote $u$ to be eigenfunction for the first non-zero eigenvalue $\lambda_1,$ that is
\begin{align*}
    -\Delta u = \lambda_1 u \quad \textup{in }M,
\end{align*}
with Neumann boundary condition, in case $\partial M \neq \emptyset.$ We denote the second fundamental form of $\partial M$ by $\SecFun.$ %In Theorem \ref{thm: main theorem}, we assume that either $\partial M = \emptyset$ or that $\SecFun>0.$ 
Further, we assume that $u$ is normalized such that 
\begin{align*}
   -1 = \min u < 0< \max u := u^* \leq 1.
\end{align*}
We denote
 \[A_u := \Hess u\left(\frac{\nabla u}{|\nabla u|},\frac{\nabla u}{|\nabla u|}\right)\] to simplify notation in the proof of Theorem \ref{thm: gradient comp}.
\subsection{Properties of the one-dimensional models}
In the following, we introduce some notation for our perturbed parameters. More precisely, we will perturb the coefficients of $T_{n,K}$ so that we have more room in the maximum principle calculation (see Theorem \ref{thm: gradient comp}). We denote the perturbed parameters to be $\overline K < K,$ $\overline{n}>n,$ and $\overline \lambda > \lambda_1$ and we let $T$ be a solution to the following Ricatti equation
\begin{align}
    \label{eq: equation for T}T' = \frac{T^2}{\overline{n}-1}+(\overline{n}-1)\overline K.
\end{align}
  Note that depending on the sign of $\overline K,$ there are different possible solutions for $T.$ We now collect the solutions to the above equation that we will consider throughout this work. For the case $\overline K>0$ it suffices to consider only one solution to \eqref{eq: equation for T}, which is given in \eqref{def: of T}. That is, for the case $\overline K >0$ we  consider $T= T_{\overline{n},\overline K}$ to be defined 
\begin{align*}
    \label{eq: def of T for K >0} T_{\overline{n},\overline K}(t) = (\overline{n}-1) \sqrt{\overline K} \tan(\sqrt{\overline K}t), \quad \textup{defined on }\left(-\frac{\pi}{2\sqrt{\overline K}}, \frac{\pi}{2\sqrt{\overline K}}\right).
\end{align*}The case $
\overline K <0$ is more delicate, we consider the following two solutions to \eqref{eq: equation for T}.
\begin{align*}
      T_{\overline{n},\overline K,+}(t) =& -(\overline{n}-1)\sqrt{-\overline K} \textup{coth}(\sqrt{-\overline K}t), \quad \textup{for }t \in (0,\infty) ,\\
  T_{\overline{n},\overline K}(t)=& -(\overline{n}-1)\sqrt{-\overline K} \textup{tanh}(\sqrt{-\overline K}t),\quad \textup{for }t \in \mathbb R.
\end{align*}
 In this work, we focus on the cases $\overline K >0$ and $\overline K<0,$ since $\overline K$ is a perturbation of $K,$ we can choose it possibly smaller and still get a sharp estimate simply by approximation. Note that the result for $K =0$ was also shown in \cite{ramos2020zhong}.

 In our comparison, we will consider a one-dimensional model where $\overline \lambda$ and the left endpoint are fixed:
\begin{definition}\label{def: of one dim model}
    For $\overline \lambda >0,$ $\overline{n} > 1$ and $\overline K\in \mathbb R\backslash\{0\}$ fixed, let $T$ be one of the solutions above, defined on the corresponding interval $I$ indicated above and let $w = w^{\overline\lambda}_{T, a}$ be the solution to the initial value problem on $I,$ where $a \in \overline I.$
    \begin{equation}\label{model: kroger}
        \begin{cases}
            w''-Tw'+\overline\lambda w=0 \\
            w(a) = -1, \, w'(a)=0. 
        \end{cases}
    \end{equation}
    We also let ${d}(a,T, \overline\lambda)>0$ be the smallest positive number $ d>a$ such that $w'(a+{d})=0$ and set ${d}(a,T, \overline\lambda) = \infty$ if such a number does not exist. We sometimes omit the dependence $ d$ on $\lambda,$ as long as there is no confusion.
    We define the right end point of the interval to be $b = b(a,T,\overline \lambda),$ that is
    \begin{align*}
        b := a + d(a,T,
        \overline\lambda). 
    \end{align*}
\end{definition}
\begin{remark}
    Existence and uniqueness of $w$ follow from standard ODE techniques.
\end{remark}

Note that if $ d = {d}(a,T, \overline \lambda)<\infty,$ in Definition \ref{def: of one dim model}, the number $\overline\lambda >0$ is then a \textit{Neumann eigenvalue} of the operator \[\mathcal L_T= \frac{d^2}{dt^2}-T\frac{d}{dt } \quad \textup{on }[a,b].\] 
 While in general, the Neumann eigenvalues do not satisfy the domain monotonicity property, the first Neumann eigenvalues of the operators $\mathcal L_T$ does satisfy the domain monotonicity property. That is, for intervals $I_1,I_2$ \begin{align}
     \label{ineq: domain monotonicity} I_1 \subset I_2 \textup{, then } \lambda_1(T,I_1) \geq \lambda_1(T,I_2)
 \end{align} where $\lambda_1(T,I_i)$ denotes the first Neumann eigenvalue of the operator $\mathcal L_T$ on the interval $I_i$.

Denote $d_{\overline{n},\overline K,\overline \lambda}$ to be the length of the symmetric interval of the operator $\mathcal{L}_{T_{\overline{n},\overline K}}$ with eigenvalue $\overline \lambda,$ that is \[\overline \lambda = \lambda_1\left(T_{\overline{n},\overline K},\left[-\frac{d_{\overline{n},\overline K,\overline \lambda}}{2}, \frac{d_{\overline{n},\overline K,\overline\lambda}}{2}\right]\right).\] Note that in the case $\overline K >0,$ we assume that \[
\overline \lambda > \overline{n}\overline K,\]as otherwise such a $d$ might not exist (or $d = \tfrac{\pi}{\sqrt{\overline K}}$ for $\lambda = \overline{n}\overline K$).
 
\begin{proposition}[\cite{bakry2000some}*{Section 7, Theorem 13}]
\label{thm: central interval}
    For any $\overline{n}\geq 1,$ $\overline K\in \mathbb R,$ let $T$ be any solution to \eqref{eq: equation for T}. Then \[\lambda_1\left(T, [a,b]\right) \geq \lambda _1\left(T_{\bar{n},\bar{K}},\left[ -\tfrac{d}{2}, \tfrac{d}{2}\right]\right),\]
    where $d = b-a.$
\end{proposition}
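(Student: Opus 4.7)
The plan is to establish this as an isoperimetric-type inequality for the first Neumann eigenvalue of the weighted Sturm-Liouville operator $\mathcal{L}_T$, following the strategy of Bakry-Qian. I would proceed in three steps: a variational reformulation, a shift-monotonicity argument for the symmetric model, and a reduction of the remaining solutions of \eqref{eq: equation for T} to that model.

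First, I rewrite $\mathcal{L}_T w = w'' - T w'$ in divergence form as $\mathcal{L}_T w = m^{-1}(m w')'$ with weight $m(t) := \exp\bigl(-\int T(s)\,ds\bigr)$. Explicitly, $m = \cos^{N-1}(\sqrt{\overline K}\, t)$ for $T = T_{N,\overline K}$ when $\overline K > 0$, and analogously $m = \cosh^{N-1}(\sqrt{-\overline K}\, t)$ for $T_{N,\overline K}$ when $\overline K < 0$, while $m = \sinh^{N-1}(\sqrt{-\overline K}\, t)$ for the singular solution $\overline T_{N,\overline K}$. The first Neumann eigenvalue then admits the weighted Rayleigh characterization
\[
\lambda_1(T, I) = \inf\left\{\frac{\int_I m\,(w')^2\,dt}{\int_I m\,w^2\,dt}\ :\ w\not\equiv 0,\ \int_I m w\,dt = 0\right\}.
\]

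Next, I would establish that, among all intervals of fixed length $d$, the symmetric interval $[-d/2, d/2]$ minimizes $\lambda_1(T_{N,\overline K}, \cdot)$. Set $\phi(c) := \lambda_1(T_{N,\overline K}, [c, c+d])$. Since $T_{N,\overline K}$ is odd in $t$, reversing orientation yields $\phi(c) = \phi(-c-d)$, so $\phi$ is symmetric about $c = -d/2$ and in particular $\phi'(-d/2) = 0$. A Hadamard-type variational computation (differentiating the Rayleigh quotient in the endpoint and using the Neumann conditions to eliminate boundary flux terms) gives
\[
\phi'(c) = \phi(c)\,\bigl(m(c)\,w_c(c)^2 - m(c+d)\,w_c(c+d)^2\bigr),
\]
where $w_c$ is the $L^2(m\,dt)$-normalized first eigenfunction on $[c,c+d]$. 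The main obstacle is showing that the symmetric point is a genuine minimum rather than a saddle or maximum. I would accomplish this by analyzing the sign of $m(c)w_c(c)^2 - m(c+d)w_c(c+d)^2$ for $c > -d/2$, exploiting the log-concavity of $m$ (a consequence of $(\log m)'' = -T' = -T^2/(N-1) - (N-1)\overline K$ together with the Riccati equation) and the fact that the first Neumann eigenfunction is strictly monotone.

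Finally, every solution of \eqref{eq: equation for T} other than $\overline T_{N,\overline K}$ is a translate of $T_{N,\overline K}$, so translation invariance of the spectrum immediately reduces the comparison for $T = T_{N,\overline K}$ on $[a,b]$ to the previous step. For the singular solution $\overline T_{N,\overline K} = -(N-1)\sqrt{-\overline K}\coth(\sqrt{-\overline K}\, t)$ on $[a,b]\subset (0,\infty)$, I would compare with the $\tanh$-model via the pointwise bound $|\overline T_{N,\overline K}(t)| > |T_{N,\overline K}(t)|$ for $t > 0$: intuitively, the singular weight $\sinh^{N-1}$ (which vanishes at $0$) forces eigenfunctions to oscillate at least as fast as in the regular model. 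A test-function argument, transporting candidates from the $\tanh$-Rayleigh quotient to the $\coth$-quotient via a suitable change of variable, yields the inequality on any subinterval of $(0,\infty)$. Combining this with Step~2 completes the proof.
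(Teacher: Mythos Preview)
The paper does not supply a proof of this proposition; it is quoted from Bakry--Qian, so there is no in-paper argument to compare your sketch against. I can only assess your outline on its own merits and against what Bakry--Qian actually do.

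Your Hadamard formula
\[
\phi'(c)=\phi(c)\bigl(m(c)\,w_c(c)^2-m(c+d)\,w_c(c+d)^2\bigr)
\]
is correct, and the oddness of $T_{N,\overline K}$ does force $\phi'(-d/2)=0$. The gap is the next step. First, a sign slip: with the paper's convention $T_{N,\overline K}(t)=-(N-1)\sqrt{\overline K}\tan(\sqrt{\overline K}\,t)$ and $m=\exp\bigl(-\int T\bigr)$ one obtains $m=\cos^{-(N-1)}(\sqrt{\overline K}\,t)$, not $\cos^{N-1}$; a direct computation of $(\log m)''=-T'$ in both the $\overline K>0$ and $\overline K<0$ (tanh) cases shows that $m$ is log-\emph{convex}, not log-concave. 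So the tool you invoke to pin down the sign of $\phi'(c)$ for $c>-d/2$ is simply not available. More fundamentally, even granting some convexity property of $m$, the quantity $m(c)w_c(c)^2-m(c+d)w_c(c+d)^2$ also depends on the boundary values of the \emph{moving} normalized eigenfunction $w_c$, and you give no mechanism for controlling those: monotonicity of $w_c$ on $[c,c+d]$ says nothing about the relative sizes of $|w_c(c)|$ and $|w_c(c+d)|$. Without this, nothing rules out that $c=-d/2$ is a \emph{maximum} of $\phi$ rather than a minimum. Bakry and Qian avoid domain variation entirely; their argument works at the ODE level, comparing solutions for different members of the Riccati family via Sturm-type oscillation reasoning.

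Your final step, reducing the singular solution $\overline T_{N,\overline K}$ to the regular one by ``transporting candidates via a suitable change of variable'', is too vague to assess; the pointwise bound $|\overline T_{N,\overline K}|>|T_{N,\overline K}|$ on $(0,\infty)$ does not by itself yield a Neumann eigenvalue inequality in the required direction.
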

As a consequence, combining Proposition \ref{thm: central interval} and \eqref{ineq: domain monotonicity}, we find that given $\overline{n}, \overline K,$ and $\overline \lambda >0,$ (if $\overline K>0$ assume $\overline \lambda>\overline{n}\overline K$)  \begin{align*}
    \label{ineq: length of interval comparison} d(  a,T,\overline  \lambda) \geq d_{\overline{n}, \overline K, \overline \lambda}.
\end{align*}
This will be crucial later in the proof of Theorem \ref{thm: main theorem}.
\subsection{The auxiliary function}
Next we introduce our auxiliary function $J$ that will play a key role in our computation for proving the gradient estimate in the integral curvature case.
\begin{definition}\label{def: of J}
    For $K \in \mathbb R,$ $\tau >1,$ and $\sigma \geq 0,$ let $J$ be a positive solution to the equation \begin{equation}\label{Jeq}
    \Delta J-\tau \frac{|\nabla J|^2}{J}-2J\rho_K=-\sigma J,
\end{equation}
where in the case $\partial M \neq \emptyset,$ we assume Neumann boundary conditions. Here $\rho_K$ is given as in \eqref{eq: def of rho k}.
\end{definition}
The intimate relationship between the integral curvature condition $\kappa(p,K) <\varepsilon$ (where $\kappa(p,K)$ is defined in \eqref{eq: def of k bar}) and $J$ becomes clear in the following lemma.
\begin{lemma}\label{lem: J} On a compact manifold $(M,g)$ (possibly with non-empty $C^2$ convex boundary) of dimension $n,$ diameter $D>0,$ and for any $\delta>0$, there exists $\eps = \eps(n,p,D,\tau)>0$ such that if $\kappa(p,K)\leq \eps$, then there is a number $\sigma$ and a corresponding function $J$ solving \eqref{Jeq} such that $0\leq \sigma \leq 4\eps$ and 
\begin{align*}
    |J-1|\leq \delta.
\end{align*}
\end{lemma}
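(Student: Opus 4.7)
The plan is to linearize \eqref{Jeq} via the substitution $J=\phi^{1/(1-\tau)}$, valid since $\tau>1$ makes the exponent $1/(1-\tau)<0$ and we require $\phi>0$. A direct computation, using that the choice $\alpha=1/(1-\tau)$ makes the coefficient $\alpha(\alpha-1)-\tau\alpha^2$ of $\phi^{\alpha-2}|\nabla\phi|^2$ in $\Delta J-\tau|\nabla J|^2/J$ vanish, reduces \eqref{Jeq} to the linear eigenvalue problem
\begin{equation*}
-\Delta\phi-2\beta\rho_K\phi=-\sigma\beta\phi,\qquad \beta:=\tau-1>0,
\end{equation*}
with Neumann boundary condition preserved. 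I will take $\phi$ to be the first (positive) eigenfunction of the Schr\"odinger operator $L:=-\Delta-2\beta\rho_K$, and set $\sigma:=-\mu_1(L)/\beta$, where $\mu_1(L)$ is characterized variationally by the Rayleigh quotient $\mathcal R(\phi)=\int(|\nabla\phi|^2-2\beta\rho_K\phi^2)\,dV\big/\int\phi^2\,dV$. Existence of a positive minimizer follows from standard spectral theory once $\mathcal R$ is bounded below, which will be ensured in the next step.

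Next I control $\sigma$. The test function $\phi\equiv 1$ gives $\mu_1\leq-2\beta\overline k(1,K)\leq 0$, hence $\sigma\geq 0$. For the upper bound, H\"older's inequality with conjugate exponents $p,\,q=p/(p-1)$ yields $\int\rho_K\phi^2\leq \overline k(p,K)\,\textup{Vol}(M)^{1/p}\,\|\phi\|_{2q}^2$, and since $p>\tfrac{n}{2}$ implies $2q<\tfrac{2n}{n-2}$, the Sobolev embedding on $(M,g)$ (available with constant depending only on $n,p,D$ under the integral curvature smallness, by Petersen--Wei type results) gives $\|\phi\|_{2q}^2\leq C(n,p,D)(\|\nabla\phi\|_2^2+\|\phi\|_2^2)$. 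Substituting into the Rayleigh quotient and absorbing the gradient term for $\varepsilon_0$ sufficiently small produces $-\mu_1\leq C(n,p,D,\tau)\,\beta\,\overline k(p,K)$. After further shrinking $\varepsilon_0=\varepsilon_0(n,p,D,\tau)$, this yields $\sigma\leq 4\,\overline k(p,K)\leq 4\varepsilon$.

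Finally I establish the $L^\infty$ closeness $|J-1|\leq\delta$. Since \eqref{Jeq} is homogeneous in $J$, I rescale so that $\bar\phi:=\textup{Vol}(M)^{-1}\int_M\phi\,dV=1$; the deviation $\psi:=\phi-1$ has zero mean and solves $-\Delta\psi=(2\beta\rho_K-\sigma\beta)(1+\psi)$ with zero Neumann data. Moser iteration for this equation with potential and right-hand side in $L^p$ ($p>\tfrac{n}{2}$), combined with the Poincar\'e inequality for zero-mean functions, provides a quantitative bound $\|\psi\|_\infty\leq C(n,p,D,\tau)\,\overline k(p,K)^\gamma$ for some $\gamma>0$; this step is the main technical content, and it depends on having the Sobolev constant uniform in the integral curvature data, which is precisely what the smallness $\overline k(p,K)<\varepsilon_0$ buys. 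Since the map $x\mapsto x^{1/(1-\tau)}$ is Lipschitz in a neighborhood of $x=1$, the bound on $\|\psi\|_\infty$ transfers to $|J-1|\leq C'\|\psi\|_\infty$, and choosing $\varepsilon_0$ small enough in terms of $\delta,n,p,D,\tau$ yields $|J-1|\leq\delta$. The principal obstacle is thus the uniform Moser iteration under only integral Ricci control, rather than the linearization or the Rayleigh estimate, both of which are routine.
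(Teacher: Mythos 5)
Your linearization $J=\phi^{1/(1-\tau)}$ is exactly the paper's substitution $J=W^{-1/(\tau-1)}$ (the paper states this in the remark following the lemma, citing \cite{ramos2020zhong} for the closed, $K=0$ case and Gallot/Petersen--Sprouse/Dai--Wei--Zhang for the requisite Sobolev and Poincar\'e tools under integral curvature), and your variational bound on $\sigma$ plus Moser iteration for $\|\phi-1\|_\infty$ are precisely the estimates those references supply. So this is the same approach, filled in with more detail; the one point to flag is that your Rayleigh-quotient argument naturally yields $\sigma\leq C(n,p,D,\tau)\overline{k}(p,K)$ with a Sobolev-dependent constant rather than the bare $4$ stated in the lemma, and you should also make explicit that $\varepsilon_0$ must depend on $\delta$ (as you do in the final sentence) — but neither of these affects how the lemma is used downstream, where only the smallness of $\sigma$ and $|J-1|$ matters.
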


\begin{remark}
The closed and $K=0$ case was proved in \cite{ramos2020zhong}. The same argument extends to the present setting, with $K\in\mathbb R$ and possibly nonempty convex boundary. Indeed, under the transformation
\[
J=W^{-\frac{1}{\tau-1}},
\]
equation \eqref{Jeq} is equivalent to
\[
\Delta W+VW=\widetilde{\sigma}W,
\]
where
\[
V=2(\tau-1)\rho_K,
\qquad
\widetilde{\sigma}=(\tau-1)\sigma,
\]
together with Neumann boundary conditions in the case $\partial M\neq\emptyset$. The proof then follows exactly as in \cite[Proposition~2]{ramos2020zhong}: one estimates $W$ from above and below by Moser iteration, which requires a rough Neumann Poincar\'e inequality and a global Neumann Sobolev inequality. We establish these two inequalities under integral Ricci curvature bounds in Appendix~\ref{appendix} for convenience, since we were unable to find a convenient reference for the exact form needed here. Since the Neumann condition eliminates boundary terms in the integration by parts, no further modification is required. For related Sobolev inequalities under integral curvature assumptions in the boundaryless, $K=0$, or local setting, see \cite{gallot1988isoperimetric, petersen1998integral, dai2018local,  WangWei, RamosSeto}.
\end{remark}

% \begin{remark}
%     The closed and $K=0$ case was proved in \cite{ramos2020zhong}.  By following the argument there, we can see that it holds for $K\in \mathbb{R}$.  The key observation is that under the transformation $J=W^{-\frac{1}{\tau-1}}$, \eqref{Jeq} is equivalent to the eigenvalue equation
%     \begin{align*}
%         \Delta W+VW=\tilde{\sigma}W,
%     \end{align*}
%     where $V=2(\tau-1)\rho_K$ and $\tilde{\sigma}=(\tau-1)\sigma$.\footnote{In the case $\partial M \neq \emptyset,$ one has that $W$ satisfies Neumann boundary condition.}  This form allows us to estimate $W$ from above and below using Poincar\'e inequality and Sobolev inequality.  Such tools were established under a general integral curvature setting by Gallot \cite{gallot1988isoperimetric}. See also Petersen-Sprouse \cite{petersen1998integral} or Dai-Wei-Zhang  \cite{dai2018local}.
    
% \end{remark}

\section{Gradient Comparison}\label{sec: grad comparison}
In this section we establish the key gradient comparison of the first eigenfunction on the manifold to the first eigenfunction of the one-dimensional model. %In the following, we let $u$ be the eigenfunction on $M$ with first non-trivial eigenvalue $\lambda_1$, that is 
%The main obstacle compared to the setting of Kr\"oger is the lack of point-wise control of the Ricci curvature. To overcome this difficulty, we introduce the auxiliary function $J$ and perturb the coefficients of the one-dimensional model slightly. 
\begin{theorem}\label{thm: gradient comp}
 For every $\delta> 0$ there exist parameters $\tau>1,$ $\overline{n} >n$, and a constant $\eps_0=\varepsilon_0 (n,p,D, \delta)>0$ with the following property.  If $\kappa(p,K)<\eps_0$, let $J$ be the positive solution of \eqref{Jeq}, with $|J-1|\leq \delta$ and associated constant $\sigma$.  Define $\overline{K}$ by 
 \begin{align*}
   \overline K =
   \begin{cases}
       (1-\delta)K\frac{ n-1}{\overline{n}-1}-(1+\delta)\frac{\sigma}{\overline{n}-1}\quad \textup{if }K \geq 0 \\
       (1+\delta)K\frac{ n-1}{\overline{n}-1}-(1+\delta)\frac{\sigma}{\overline{n}-1}\quad \textup{if }K <0.
       \end{cases}
\end{align*}
 Let $T$ be a solution of 
 \begin{align*}
     T'=\frac{T^2}{\overline{n}-1}+(\overline{n}-1)\overline{K},
 \end{align*}
and let $w = w^{\overline \lambda}_{T,a}$ be the one-dimensional initial value problem \eqref{model: kroger} on an interval $[a,b]$ such that $w' \geq 0$ on  $[a,b],$ where 
 \[\overline{\lambda}=(1+2\delta)\lambda_1.\] Assume that 
 \[[-1,u^\star] \subset [-1, w(b)].\]
 Then 
\begin{align*}
    J|\nabla u|^2 \leq (w')^2\circ (w^{-1}(u)),\label{ineq: gradient-comparison}
\end{align*}
where $J$  is given by Definition \ref{def: of J}.
\end{theorem}
\begin{remark}
The constants $\overline{n}$ and $\overline K$ are explicit constants that are defined in the proof.  They are small perturbations of $n$ and $K,$ depending on $\delta.$
\end{remark}
\begin{proof}
We first consider the case $\partial M = \emptyset.$  By contradiction, assume that \[J|\nabla u|^2> (w'(w^{-1}(u(x))))^2\] at some point $x\in M$.  For $c>1$, define \[Q = J|\nabla u|^2-(cw')^2((cw)^{-1}(u(x))),\]   where we choose $c$ such that at the maximal point $\overline x,$ we have $Q=0,$ i.e.  
\begin{align}\label{eqn: at x}
J(\overline x)|\nabla u|^2(\overline x) = (cw')^2((cw)^{-1}(u(\overline x))).
\end{align}
At that point $\overline x$, we have 
\begin{align}\label{first-order}
\nabla Q(\overline x)&=0 \\ \label{second-order}
\Delta Q(\overline x)&\leq 0.
\end{align}
It is easy to see that at $\overline x,$ \eqref{first-order} implies that \begin{align}
    \Hess u( \nabla u, \cdot) &= c \frac{w''}{J}\nabla u -\frac{|\nabla u|^2}{2J}\nabla J,\label{eqn: first-order-condition-matrix} \\
   A_u &= c \frac{w''}{J} -\frac{1}{2J}\langle\nabla J ,\nabla u\rangle, \label{eqn: first-order-condition}
\end{align}
where we write $w'' := w''((cw)^{-1}(u(\overline x)))$, and similarly for $w, w'$ and $w'''$ to simplify the notation. % Let 
Moreover, \eqref{eqn: at x}, \eqref{second-order} together with \eqref{eqn: first-order-condition-matrix} gives
\begin{align}
0 &\geq \frac{1}{2}(\Delta J)|\nabla u|^2+\langle \nabla J,\nabla |\nabla u|^2\rangle +\frac{1}{2}J\Delta |\nabla u|^2-\frac{w'''}{w'}|\nabla u|^2-cw''\Delta u \nonumber\\
&= \frac{1}{2}(\Delta J)|\nabla u|^2+2\frac{cw''}{J}\langle \nabla J,\nabla  u\rangle  -\frac{|\nabla J|^2}{J}|\nabla u|^2+\frac{1}{2}J\Delta |\nabla u|^2-c^2w'w'''J^{-1}+c\lambda_1 w'' u.\label{ineq: max-principle-step1}
\end{align}
To continue, we will apply the Bochner formula, together with \[\Ric \geq -\rho_K+(n-1)K\] and the eigenvalue equation \[\Delta u = -\lambda_1 u,\] so that  \begin{align}
\begin{split}
\frac{1}{2}\Delta |\nabla u|^2 &= |\Hess u|^2+\Ric(\nabla u,\nabla u)+\langle \nabla u, \nabla \Delta u\rangle\\
&\geq |\Hess u|^2+\left(-\rho_K+(n-1)K-\lambda_1\right)|\nabla u|^2. \label{ineq: Bochner-estimate}
\end{split}
\end{align} To estimate $|\Hess u|^2,$ we use the refined Cauchy-Schwarz inequality of the Hessian:
\begin{align}
\begin{split}
|\Hess u|^2 &\geq \frac{(\Delta u)^2}{n}+\frac{n}{n-1}\left(\frac{\Hess u(\nabla u,\nabla u)}{|\nabla u|^2} - \frac{(\Delta u)}{n}\right)^2\\
&=\frac{\lambda_1^2 u^2}{n}+\frac{n}{n-1}\left(A_u+\frac{\lambda_1 u}{n}\right)^2\\
&=\frac{\lambda_1^2 u^2}{n-1}+\frac{n}{n-1}A_u^2+\frac{2\lambda_1 u}{n-1}A_u.\label{ineq: hessian-lower-bound}
\end{split}
\end{align}
Using \eqref{ineq: Bochner-estimate} and \eqref{ineq: hessian-lower-bound} in \eqref{ineq: max-principle-step1}, we get 
\begin{align*}
0  \geq &\frac{1}{2}(\Delta J)|\nabla u|^2+\frac{2cw''}{J}\langle \nabla u,\nabla J\rangle -\frac{|\nabla J|^2}{J}|\nabla u|^2
+J\frac{\lambda_1^2 u^2}{n-1}+J\frac{n}{n-1}A_u^2+J\frac{2\lambda_1 u}{n-1}A_u\nonumber\\
& +\left(-\rho_K+(n-1)K-\lambda_1\right)J |\nabla u|^2-c^2w'w'''J^{-1}+c\lambda_1 w''u
\end{align*}
Applying the first order condition \eqref{eqn: first-order-condition} we have
\begin{align}
\begin{split}
0  \geq &\frac{1}{2}(\Delta J)|\nabla u|^2+\frac{2cw''}{J}\langle \nabla u,\nabla J\rangle -\frac{|\nabla J|^2}{J}|\nabla u|^2
+J\frac{\lambda_1^2 u^2}{n-1}+J\frac{n}{n-1}\left(c \frac{w''}{J} -\frac{1}{2J}\langle\nabla J ,\nabla u\rangle\right)^2\\
& +J\frac{2\lambda_1 u}{n-1}\left(c \frac{w''}{J} -\frac{1}{2J}\langle\nabla J ,\nabla u\rangle\right)-\rho_KJ |\nabla u|^2 +\left((n-1)K-\lambda_1\right)(cw')^2-c^2w'w'''J^{-1}+c\lambda_1 w''u.\label{ineq: hessian-lower-bound-2}
\end{split}
\end{align}
At $t = (cw)^{-1}(u(\overline x)),$ this becomes, after rearranging
\begin{align*}\begin{split}
0  \geq &\left(\frac{1}{2}(\Delta J)-\frac{|\nabla J|^2}{J}-\rho_KJ\right)|\nabla u|^2+\frac{2cw''}{J}\langle \nabla u,\nabla J\rangle 
+J\frac{\lambda_1^2 c^2w^2}{n-1}\\
&+J\frac{n}{n-1}\left(c \frac{w''}{J} -\frac{1}{2J}\langle\nabla J ,\nabla u\rangle\right)^2
 +J\frac{2\lambda_1 cw}{n-1}\left(c \frac{w''}{J} -\frac{1}{2J}\langle\nabla J ,\nabla u\rangle\right)\\
 &+\left((n-1)K-\lambda_1\right)(cw')^2-c^2w'w'''J^{-1}+c^2\lambda_1 w''w.
 \end{split}
\end{align*}
Rewriting, we have
\begin{align}
\begin{split}
    0 &\geq \frac{1}{2}\left((\Delta J)-2\frac{|\nabla J|^2}{J}-2\rho_KJ\right)|\nabla u|^2+\frac{(n-2)cw''}{(n-1)J}\langle \nabla u,\nabla J\rangle+{\frac{n}{4J(n-1)}\langle\nabla J ,\nabla u\rangle^2}\\
    &\,+\frac{(n+1)\lambda_1 c^2w''w}{n-1}+\frac{J\lambda_1^2(cw)^2}{n-1} -\frac{\lambda_1 c w }{n-1}\langle\nabla J ,\nabla u\rangle\\
    &+\left((n-1)K-\lambda_1\right)(cw')^2+c^2J^{-1}\left(\frac{n}{n-1}(w'')^2-w'w'''\right).\label{ineq: inequality from second derivatives rearranged 1}
    \end{split}
\end{align}
We now bound the mixed terms $\langle \nabla J,\nabla u\rangle$. We let $\alpha, \beta >0$ and get that 
\begin{align}
    -\frac{\lambda_1 cw}{n-1}\langle \nabla J,\nabla u\rangle &\geq -\alpha J\frac{\lambda_1^2 c^2w^2}{(n-1)}-\frac{|\nabla J|^2|\nabla u|^2}{\alpha 4 (n-1)J}\label{ineq: mixed terms 1}
\end{align}
and
\begin{align}
\frac{n-2}{n-1}\frac{cw''}{J}\langle \nabla J,\nabla u\rangle &\geq -\frac{n}{n-1}\beta \frac{c^2(w'')^2}{J}-\frac{(n-2)^2}{n(n-1)}\frac{|\nabla u|^2|\nabla J|^2}{\beta 4J}.\label{ineq: mixed terms 2}
\end{align}

Applying \eqref{ineq: mixed terms 1} and \eqref{ineq: mixed terms 2} to \eqref{ineq: inequality from second derivatives rearranged 1}, we get that 
\begin{align*}
0 &\geq\frac{1}{2}\left((\Delta J)-\left(2+\frac{(n-2)^2}{{2}\beta n(n-1)} +\frac{1}{2(n-1)\alpha}\right)\frac{|\nabla J|^2}{J}-2\rho_KJ\right)|\nabla u|^2\\
&+J(1-\alpha)\frac{\lambda_1^2 c^2w^2}{n-1}+\frac{n}{n-1}(1-\beta)\frac{c^2(w'')^2}{J}\\
& +\Bigl((n-1)K -\lambda_1\Bigr)(cw')^2-c^2\frac{w'''w'}{J}+\frac{n+1}{n-1}\lambda c^2w''w%\\
%&+\textcolor{blue}{\frac{n-\delta^{-1}}{n-1}\frac{(\langle \nabla J,\nabla u\rangle)^2 }{4J}}.
\end{align*}
We apply the equation satisfied by $J$ 
\begin{align*}
\frac{1}{2}\left((\Delta J)-\left(2+\frac{(n-2)^2}{{ 2}\beta n(n-1)} +
\frac{1}{2(n-1)\alpha}\right)\frac{|\nabla J|^2}{J}-2\rho_KJ\right)|\nabla u|^2=-\sigma J|\nabla u|^2
\end{align*}
and the fact that we are at a point $Q=0$ so that $J|\nabla u|^2=(cw')^2$. Then
\begin{align}
\begin{split}\label{mainineq}
0&\geq J(1-\alpha)\frac{\lambda_1^2 c^2w^2}{n-1}+\frac{n}{n-1}(1-\beta)\frac{c^2(w'')^2}{J}\\
& \hspace{0.2 in}+\Bigl((n-1)K-\lambda_1 -\sigma\Bigr)(cw')^2-c^2w'''w'J^{-1}+\frac{n+1}{n-1}\lambda_1 c^2w''w. 
\end{split}
\end{align}

Note that we have not used \eqref{model: kroger} up to this point.  Recall that $T$ satisfies \eqref{eq: equation for T}. Thus 
\begin{align*}
    w'''w' & =\left(\frac{\overline{n}}{\overline{n}-1}T^2 + (\overline{n}-1)\overline{K}\right)(w')^2  -\overline\lambda Tw'w -\overline\lambda (w')^2 ,\\
    (w'')^2 &= T^2(w')^2 -2\overline \lambda T w'w +\overline\lambda^2 w^2, \\
    w''w &= Tw'w -\overline\lambda w^2. 
\end{align*}
Using the above identities to \eqref{mainineq}, we re-write in terms of $\bar{\lambda}^2w^2$, $\bar{\lambda}Tw'w$ and $T^2(w')^2$ so that
\begin{align*}
0&\geq  \left((1-\alpha)\frac{J^2\lambda_1^2}{\overline{\lambda}^2}-(n+1)\frac{J\lambda_1}{\overline{\lambda}}+n-n\beta\right)\frac{\overline{\lambda}^2w^2}{J(n-1)}\\
&\hsp-\left(n+1-2n\beta-(n+1)\frac{J\lambda_1}{\overline{\lambda}}\right)\frac{\bar{\lambda}Tw'w}{J(n-1)}\\
&\hsp+\left(\frac{\overline{n}-n}{\overline{n}-1}-n\beta\right)\frac{T^2(w')^2}{J(n-1)}\\
&\hsp+\Bigl((n-1)K-\lambda_1 - \sigma-J^{-1}((\overline{n}-1)\bar{K}+\overline{\lambda})\Bigr)(w')^2.
\end{align*}
For convenience, set $y := \frac{J\lambda_1}{\lb}$. We observe that the first three lines form a quadratic form 
\begin{align*}
Q(u,v)=Pu^2-Suv+Rv^2,
\end{align*}
with  $u:=\bar{\lambda}w$, $v:=Tw'$ and 
\begin{align*}
    P&=(1-\alpha)y^2-(n+1)y+n(1-\beta)\\
    S&=n+1-2n\beta-y(n+1)\\
    R&=\frac{\overline{n}-n}{\overline{n}-1}-n\beta
\end{align*}
so that 
\begin{equation}\label{ineq: final ineq}
0\geq  \frac{1}{J(n-1)}Q(u,v)+\Bigl((n-1)K-\lambda_1 - \sigma-J^{-1}((\overline{n}-1)\bar{K}+\overline{\lambda})\Bigr)(w')^2.
\end{equation}
To arrive at a contradiction, we will first show that $Q$ is a positive semi-definite quadratic form and then show that the remaining term is positive. \\
\textbf{Step 1.} $Q$ is a positive semi-definite quadratic form. It suffices to show that 
\begin{align*}
P \geq 0, \quad 
R \geq 0, \quad 
4PR \geq S^2.
\end{align*}
Now, for any $\delta >0$, and by Lemma \ref{lem: J} there exists $\varepsilon >0$ such that if the $\kappa(p,K) <\varepsilon$ then $|J-1|<\delta.$ Choosing \[\overline \lambda = (1 + 2 \delta ) \lambda_1,\] we obtain 
\begin{align*}
\frac{1-\delta}{1+2\delta} < y < \frac{1+\delta}{1+2\delta}.
\end{align*}
For notational convenience, we let 
\begin{align*}
    x = 1 - y, \quad  \delta_1 = \frac{\delta}{1+2\delta}, \text{ and } \eta = \frac{(n+1)^2}{n-1}.
\end{align*}From the bounds on $y$, we have $x \in (\delta_1, 3\delta_1)$.  Now choose
\begin{align}\label{eq: choice-alpha-beta-n}
\alpha = \delta_1^2, \quad \beta = \frac{\delta_1^2}{n}, \quad 
\overline{n} &= \frac{n - \eta\delta_1}{1 - \eta\delta_1}
\end{align}
We claim that with this choice of $\alpha, \beta$ and $\overline{n},$ $Q$ is positive semi-definite for $\delta>0$ sufficiently small.  We rewrite $P = P(\alpha, \beta), S = S(\beta),$ and $R = R(\beta),$ exactly in terms of $x$ as function of $\alpha$ and $\beta$:
\begin{align*}
P = x(n-1+x) - \alpha(1-x)^2 - n\beta,\quad  S = (n+1)x - 2n\beta, \quad   
R = \frac{\overline{n}-n}{\overline{n}-1} - n\beta
\end{align*}

We first show non-negativity of $P$.  Since $ \delta_1< x <1$,
\begin{align*}
P \ge \delta_1(n-1) - \alpha - n\beta  = \delta_1(n - 1 - 2\delta_1)>0,
\end{align*}
where the last inequaltiy holds for  $0 < \delta < (n-1)/2$.
Next we show the non-negativity of $R$. By our choice of $\overline{n}$, we get
\begin{align*}
R & = \delta_1(\eta - \delta_1) \geq 0,
\end{align*}
for $\delta >0$ small.

Finally, we show non-negativity of the determinant term.  We expand $4PR - S^2$ so that
\begin{align*}
4PR - S^2 &= 4(P_0 - \alpha (1-x)^2 - n\beta)(R_0 - n\beta) - (S_0 - 2n\beta)^2 \\
&= (4P_0R_0 - S_0^2) + 4n\beta(S_0 - P_0) - 4(\alpha (1-x)^2 + n\beta)R_0 + 4(\alpha (1-x)^2 + n\beta)n\beta - 4n^2\beta^2.
\end{align*}
Denote  $P_0 = P(0,0)$, $S_0 = S(0),$ and $R_0 =R(0)$. 
Then \[
S_0 - P_0 = (n+1)x - x(n-1+x) = x(2-x) > 0, 
\] so that we can drop the strictly positive terms $4n\beta(S_0 - P_0)$ and $4(\alpha y^2 + n\beta)n\beta$ to obtain a lower bound. Using $0< x < 1$, we get:
\begin{align}\label{ineq: lower-bound-of-det}
4PR - S^2 \ge (4P_0R_0 - S_0^2) - 4(\alpha + n\beta)R_0 - 4n^2\beta^2.
\end{align}
We now bound the leading term $4P_0R_0 - S_0^2$. Substituting $R_0 = \eta\delta_1$:
\begin{align*}
4P_0R_0 - S_0^2 &= x \big[ 4(n-1)\eta\delta_1 + 4x\eta\delta_1 - (n+1)^2 x \big].
\end{align*}
Substitute $\eta = \frac{(n+1)^2}{n-1}$ into the first inner term, and use $x \le 3\delta_1$ to bound the negative term:
\begin{align*}
4P_0R_0 - S_0^2 &\ge x \big[ 4(n+1)^2\delta_1 - 3(n+1)^2\delta_1 \big] \\
&= x(n+1)^2\delta_1\\
&\geq  (n+1)^2\delta_1^2,
\end{align*}
where the last inequality follows from that fact that $x \ge \delta_1$. Substituting this back into \eqref{ineq: lower-bound-of-det}, together with our choices of $\alpha$ and $\beta$ in \eqref{eq: choice-alpha-beta-n} gives
\begin{align*}
4PR - S^2 &\ge (n+1)^2\delta_1^2 - 4(2\delta_1^2)(\eta\delta_1) - 4(\delta_1^2)^2 \\
&= \delta_1^2 \big[ (n+1)^2 - 8\eta\delta_1 - 4\delta_1^2 \big] >0,
\end{align*}
for $\delta >0$ sufficiently small. 
Thus $Q$ is positive semi definite.

\textbf{Step 2.} The remaining terms of \eqref{ineq: final ineq} are positive. To this end, we have 
\begin{align*}
    &(n-1)K-(\overline{n}-1)\frac{\overline K}{{J}} -\sigma+\frac{\overline \lambda}{J}-\lambda_1\\
   \geq & (n-1)K-(\overline{n}-1)\frac{\overline K}{J} -\sigma+\frac{\lambda_1(1+2\delta)}{1+\delta}-\lambda_1\\
   >& (n-1)K-(\overline{n}-1)\frac{\overline K}{J} -\sigma\geq0,
\end{align*}
whenever 
\[\overline K \leq JK\tfrac{ n-1}{\overline{n}-1}-J\tfrac{\sigma}{\overline{n}-1}.\]
Depending on the sign of $K,$ we choose 
\begin{align*}
   \overline K =
   \begin{cases}
       (1-\delta)K\frac{ n-1}{\overline{n}-1}-(1+\delta)\frac{\sigma}{\overline{n}-1}\quad \textup{if }K \geq 0 \\
       (1+\delta)K\frac{ n-1}{\overline{n}-1}-(1+\delta)\frac{\sigma}{\overline{n}-1}\quad \textup{if }K <0.
       \end{cases}
\end{align*}
This finishes the proof of Theorem \ref{thm: gradient comp}.
\end{proof}

\begin{remark}
    For the case $n \geq 3,$ one can show that the maximum principle proof works, without showing that the quadratic form is positive semi definite. 
\end{remark}
\begin{lemma}
   Assume that $\partial M \neq \emptyset$ and assume that $\SecFun\geq  0.$ If \[Q(x) = J|\nabla u(x)|^2-(cw')^2((cw)^{-1}(u(x))),\] achieves the maximum at a boundary point $\overline x \in \partial M,$ one has that the Equations \eqref{first-order} and Inequality \eqref{second-order} still hold true. 
\end{lemma}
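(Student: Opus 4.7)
The plan is to verify the two conditions \eqref{first-order} and \eqref{second-order} at a boundary maximum $\overline{x}\in\partial M$ of $Q$, so that the remainder of the proof of Theorem \ref{thm: gradient comp} applies verbatim at $\overline{x}$. The heart of the argument is a Hopf-type computation showing that the outward normal derivative $\partial_\nu Q$ vanishes at $\overline{x}$, for which the Neumann boundary conditions and strict convexity of $\partial M$ provide a squeeze from both sides.

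For \eqref{first-order}, the tangential component of $\nabla Q(\overline{x})$ vanishes automatically since $\overline{x}$ is a critical point of $Q|_{\partial M}$, so it suffices to compute $\partial_\nu Q(\overline{x})$. The Neumann conditions $\partial_\nu u = 0$ (eigenvalue problem) and $\partial_\nu J = 0$ (Definition \ref{def: of J}) force $\nabla u$ to be tangent to $\partial M$, kill the $\partial_\nu J$-contribution, and kill the contribution of $(cw')^2\circ(cw)^{-1}\circ u$ since its normal derivative is proportional to $\partial_\nu u$. Applying the standard identity $\Hess u(X,\nu) = -\SecFun(X,\nabla u)$ for tangent $X$ (obtained by differentiating $\partial_\nu u=0$ along $\partial M$ and using the definition of the second fundamental form), the only surviving term gives
\begin{align*}
\partial_\nu Q(\overline{x}) = J(\overline{x})\,\partial_\nu|\nabla u|^2(\overline{x}) = -2J(\overline{x})\,\SecFun(\nabla u,\nabla u)(\overline{x}) \leq 0,
\end{align*}
using $\SecFun > 0$ and $J > 0$. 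On the other hand, $\overline{x}$ being a maximum of $Q$ on $\overline{M}$ forces $\partial_\nu Q(\overline{x}) \geq 0$. Hence $\partial_\nu Q(\overline{x}) = 0$, which together with the vanishing tangential gradient yields \eqref{first-order}.

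For \eqref{second-order}, I would use Fermi coordinates about $\overline{x}$, under which $\nabla_\nu\nu = 0$ along normal geodesics, yielding the Gauss-type decomposition
\begin{align*}
\Delta Q(\overline{x}) = \Delta^{\partial M}Q(\overline{x}) + H(\overline{x})\,\partial_\nu Q(\overline{x}) + \partial_\nu^2 Q(\overline{x}),
\end{align*}
where $H$ denotes the mean curvature of $\partial M$. Each piece is treated separately: the tangential Laplacian $\Delta^{\partial M}Q(\overline{x})\leq 0$ because $\overline{x}$ maximizes $Q|_{\partial M}$; the middle term vanishes by the first-order conclusion; and a Taylor expansion along the inward normal, $Q(\overline{x}-t\nu)-Q(\overline{x}) = \tfrac{t^2}{2}\partial_\nu^2 Q(\overline{x}) + O(t^3)\leq 0$ for small $t>0$ (the linear term being zero), gives $\partial_\nu^2 Q(\overline{x})\leq 0$. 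Summing the three contributions yields \eqref{second-order}. The only subtlety is that strict convexity $\SecFun>0$ combined with $\partial_\nu Q(\overline{x})=0$ in fact forces $\nabla u(\overline{x})=0$; this degenerate case does not obstruct the remainder of the proof of Theorem \ref{thm: gradient comp}, since \eqref{eqn: first-order-condition-matrix}--\eqref{eqn: first-order-condition} and \eqref{ineq: max-principle-step1} all reduce to trivial identities when $|\nabla u|(\overline{x})=0$.
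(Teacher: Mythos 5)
Your proof of \eqref{first-order} follows the paper's argument exactly: the Neumann conditions $\partial_\nu u = \partial_\nu J = 0$ kill every term of $\langle\nabla Q,n\rangle$ except $2J\Hess u(\nabla u,n)$, which the standard identity converts to $-2J\,\SecFun(\nabla u,\nabla u)\leq 0$; squeezing against $\langle\nabla Q,n\rangle\geq 0$ from maximality yields $\nabla Q(\overline x)=0$. The Fermi-coordinate decomposition and one-sided Taylor expansion you use to verify \eqref{second-order} are a legitimate (and welcome) elaboration of the step the paper dismisses in one line.

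Your closing remark, however, is mistaken on both counts. The claim that \eqref{eqn: first-order-condition-matrix}--\eqref{eqn: first-order-condition} ``reduce to trivial identities'' when $|\nabla u|(\overline x)=0$ fails already at the level of definitions, since $A_u=\Hess u\left(\tfrac{\nabla u}{|\nabla u|},\tfrac{\nabla u}{|\nabla u|}\right)$ is not defined there. More importantly, the degenerate case simply cannot occur: at the maximum, $Q(\overline x)=0$ by the choice of $c$, so $\nabla u(\overline x)=0$ would force $w'((cw)^{-1}(u(\overline x)))=0$, i.e., $(cw)^{-1}(u(\overline x))\in\{a,b\}$. But $c>1$ gives $-1>-c$, and $u^\star\leq w(b)<cw(b)$, so $u(\overline x)\in[-1,u^\star]$ lies strictly inside $(-c,cw(b))$ and its preimage under $cw$ lies strictly inside $(a,b)$, where $w'>0$. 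Thus $\nabla u(\overline x)\neq 0$. Notice this means that under the stated hypothesis $\SecFun>0$ one in fact gets $\langle\nabla Q,n\rangle<0$ at a boundary maximum, contradicting maximality outright; the lemma's conditional statement is therefore vacuous for strictly convex boundary, and the argument is really doing its work in the merely convex regime $\SecFun\geq 0$. You need not (and cannot correctly) argue that the degenerate case is benign---you should instead observe it is excluded.
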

\begin{proof}We first verify Equation \eqref{first-order}. Since $\overline x$ is a maximal point, we know that all derivatives tangential to $\partial M$ vanish and that the normal derivative of $Q$ is greater or equal to zero, that is we know that $\langle \nabla Q, n\rangle \geq 0.$ Our goal is to show that actually $\langle \nabla Q, n\rangle =0.$ Since $u$ satisfies Neumann boundary conditions, we know that \[\langle \nabla u, n\rangle =\langle \nabla J, n\rangle = 0\quad \textup{at} \quad \overline x.\]  For simplifying the notation, we set \[\psi(s) = (cw')^2((cw)^{-1}(s))\] calculate that \begin{align*}
    \langle \nabla Q, n\rangle = 2 J\Hess u(\nabla u , n ) + \langle \nabla J, n\rangle |\nabla u|^2+\psi'(u(x))\langle \nabla u, n\rangle   = -2J\II(\nabla u, \nabla u) \leq 0,
\end{align*}
where the last equality follows from the definition of the second fundamental form. It follows that \eqref{first-order} holds true. The inequality in \eqref{second-order} then follows in a straight forward way.
\end{proof}

\section{Maxima of Eigenfunctions}\label{sec: maximum comparison}
In this section, we show that given the eigenfunction $u$ on the manifold $M$ and eigenvalue $\lambda_1$ with \[-1 = \min u \quad \textup{and}\quad  u^\star := \max u  \leq 1,\] there exists $T = T_{\overline{n},\overline K}$ satisfying \eqref{eq: equation for T} and an interval $I$ and a Neumann eigenfunction $w$ to the eigenvalue \[\overline \lambda = (1+2\delta ) \lambda_1,\] such that 
\begin{align}
    \label{eqn: max match} \min_I w = \min u, \quad \max_I w = u^\star.
\end{align}
To show this, we first show a comparison result concerning the maxima of the eigenfunction $u$ and the one-dimensional model, which will be Theorem \ref{thm: maximum comparison}. In the subsequent part of this section, we will prove a minumum and maximum matching, namely we will show \eqref{eqn: max match}, which will be summarized in Theorem \ref{thm: maximum matching theorem}.
\subsection{Maximum Comparison} Here and in the following, let $\delta >0$ be fixed, and $\eps>0,$ $\overline{n}, \overline K, \overline \lambda$ be as in Theorem \ref{thm: gradient comp}. In the case $\overline K>0,$ we choose it to be possibly slightly smaller but still positive:
\begin{align}\label{eq: choice over overline K for K>0}
    \overline K = \min\left\{\frac{nK\left(1-C(n,p)\varepsilon\right)}{\overline{n}},(1-\delta)K\frac{ n-1}{\overline{n}-1}-(1+\delta)\frac{\sigma}{\overline{n}-1}\right\},
\end{align}
where the first term in the minimum is the lower bound of \eqref{ineq: aubry}. We distinguish the cases $\overline K>0$ and $\overline K<0$ and start by defining $m_{\overline{n}, \overline K}.$
Define 
\begin{align*}
\overline T_{\overline{n},\overline K}(t) = \begin{cases}
        {(\overline{n}-1)}\sqrt{\overline K}{\textup{tan}(\sqrt{\overline K} t)}, \quad &\textup{if }\overline K >0\\
          -{(\overline{n}-1)\sqrt{-\overline K}}{\textup{coth}(\sqrt{-\overline K}  t)}, \quad &\textup{if }\overline K <0.
    \end{cases}
\end{align*}
Note that for $\overline K>0$ we have that $ T_{\overline{n}, \overline K}= \overline T_{\overline{n}, \overline K}.$
We let $w = w^{\overline \lambda}_{\overline T_{\overline{n}, \overline K},a}$ to be the solution to the initial value problem \eqref{model: kroger} with $T = \overline T_{\overline{n},\overline K}$ , where we let $a = 0$ if $\overline K < 0$ and $a = -\tfrac{\pi}{2\sqrt{\overline K}}$ if $\overline K >0.$ We denote
\begin{align*}
    m_{\overline{n}, \overline K} := w(a + d(\overline T_{\overline{n}, \overline K},a))
\end{align*}
    the maximum value of $w$ on the interval $[a, a +  d(\overline T_{\overline{n},K}, a)].$ Recall that $b := a + d(\overline T_{\overline{n},K}, a).$
We start by showing \begin{align}
    \label{ineq: maximum comparison} u^\star \geq m_{\overline{n}, \overline K}.
\end{align} 
\begin{theorem}\label{thm: maximum comparison}
Let $p > \tfrac{n}{2},$ and $n \geq 2.$
For every $\delta >0,$ there exists $\varepsilon >0$ such that whenever $\kappa(p,K) < \varepsilon$ then \eqref{ineq: maximum comparison} holds.
\end{theorem}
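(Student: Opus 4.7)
My plan is to argue by contradiction using Theorem \ref{thm: gradient comp}. Assume $u^\star < m_{N, \overline K}$. Since $[-1, u^\star]$ is strictly contained in $[-1, w(b)] = [-1, m_{N, \overline K}]$, the hypothesis of Theorem \ref{thm: gradient comp} is satisfied with the choice $T = \overline T_{N, \overline K}$, which solves the Riccati equation \eqref{eq: equation for T}. The gradient comparison then yields the pointwise estimate
\[
J(x) \, |\nabla u(x)|^2 \leq \bigl( w'(w^{-1}(u(x))) \bigr)^2 \quad \textup{on } M.
\]
For $\overline K < 0$, where $\overline T_{N, \overline K}(t) = -(N-1)\sqrt{-\overline K}\coth(\sqrt{-\overline K}\, t)$ is singular at the left endpoint $a = 0$, I would handle the singularity by applying Theorem \ref{thm: gradient comp} on intervals $[a', b]$ and taking $a' \downarrow 0$, noting that the normalization $w(a)=-1$, $w'(a)=0$ keeps the product $T w'$ bounded in the limit.

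The next step is to integrate the pointwise inequality to obtain a contradiction. Testing against $u$ and using the eigenvalue equation $\int_M |\nabla u|^2 = \lambda_1 \int_M u^2$, together with Lemma \ref{lem: J} (which gives $|J - 1| \leq \delta$), produces
\[
\lambda_1 (1 - \delta) \int_M u^2 \leq \int_M \bigl( w'(w^{-1}(u(x))) \bigr)^2.
\]
To compare the right-hand side with $\overline \lambda \int_M u^2$, I would use the co-area formula to rewrite both sides as one-dimensional integrals against the level-set density $\mu(t) = \int_{\{u=t\}}|\nabla u|^{-1} dA$, then invoke the one-dimensional Rayleigh identity $\int_a^b (w')^2 e^{-\int_a^t T}\, dt = \overline \lambda \int_a^b w^2 e^{-\int_a^t T}\, dt$ (obtained by multiplying the ODE by $w\, e^{-\int T}$ and integrating), together with a Gallot-type volume comparison for level sets under integral curvature bounds (cf.~\cite{petersen1998integral,dai2018local}). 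The refined choice of $\overline K$ in \eqref{eq: choice over overline K for K>0} for $K > 0$, made possible by Aubry's Lichnerowicz-type estimate \eqref{ineq: aubry}, ensures $\overline \lambda > N \overline K$, which keeps the model interval nondegenerate and the volume comparison sharp.

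The main obstacle I foresee is closing the contradiction quantitatively. The above chain typically only produces $\lambda_1(1-\delta) \leq \overline \lambda (1 + O(\varepsilon))$, which is compatible with $\overline \lambda = (1+2\delta)\lambda_1$ rather than contradictory. The strict hypothesis $u^\star < m_{N, \overline K}$ must therefore be used to produce a definite positive deficit: the model-weight integrals over the missing range $[u^\star, m_{N, \overline K}]$ contribute a strictly positive quantity depending only on $n, N, \overline K, \overline \lambda$, and on $m_{N, \overline K} - u^\star$. Choosing $\varepsilon$ sufficiently small in terms of $n, p, D, \delta$ so that this deficit dominates the $O(\varepsilon)$ errors from the integral-curvature volume comparison yields the desired contradiction.
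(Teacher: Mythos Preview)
Your approach has a genuine gap at the step where you try to close the contradiction. The deficit you identify from the ``missing range'' $[u^\star, m_{N,\overline K}]$ depends on the quantity $m_{N,\overline K} - u^\star$, but the contradiction hypothesis is only that $u^\star < m_{N,\overline K}$, so this gap may be arbitrarily small. Since the $\varepsilon$ in the theorem must be chosen depending only on $n, p, D, \delta$ (and not on the particular manifold $M$ or its eigenfunction $u$), you cannot choose $\varepsilon$ small enough to be dominated by a deficit that is itself allowed to be arbitrarily small. Moreover, your step~4 invokes a ``Gallot-type volume comparison for level sets,'' but integral-curvature volume comparison concerns geodesic balls, not arbitrary sublevel sets of $u$; there is no off-the-shelf tool that compares the pushforward measure $m = (w^{-1}\circ u)_\# dV$ with the model density $\mu_{N,\overline K}$ in the way your Rayleigh-quotient argument would require.

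The paper's proof uses a completely different mechanism that avoids any need for uniformity of constants across manifolds. From the gradient comparison one first proves a monotonicity property for the function
\[
E(s) = -\exp\Bigl((1-\delta)\lambda_1 \int_{t_0}^s \tfrac{w}{w'}\Bigr)\int_a^s w(r)\,dm(r),
\]
which yields a bound $\int_{\{u\leq -1+\varepsilon\}}(-u)\,dV \leq C \int_{\{w\leq -1+\varepsilon\}}(-w)\,d\nu$ for small $\varepsilon$, with $C$ allowed to depend on the manifold. Separately, the gradient comparison shows that the sublevel set $\{u\leq -1+\varepsilon\}$ contains a ball of radius $r_\varepsilon \sim w^{-1}(-1+\varepsilon) - a$. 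Since the model weight $\mu_{N,\overline K}(t)$ vanishes to order $N-1$ at the endpoint, this chain gives $\mathrm{Vol}(B(x_0, r_\varepsilon)) \leq C'' r_\varepsilon^{\,N}$ for all small $\varepsilon$. As $N > n$, this contradicts the $n$-dimensional small-ball asymptotic $\mathrm{Vol}(B(x_0,r)) \sim c\, r^n$ regardless of how large $C''$ is. The contradiction is therefore dimensional rather than quantitative, and does not require tracking a deficit from the range $[u^\star, m_{N,\overline K}]$.
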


 The proof of Theorem \ref{thm: maximum comparison} will be divided into several steps and follows the work of Bakry and Qian \cite{bakry2000some} (see also \cite{naber2014sharp}). However, since we are working under an integral curvature assumption, some parts of the proof need modification slightly. Through this first part of the section, we assume for contradiction that $\max u = u^\star < m_{\overline{n}, \overline K}.$ Let us fix some notation.

We let $t_0 \in \left(a,b\right)$ denote the unique zero of $w,$ where $w$ is chosen as above. Let $g = w^{-1}\circ u$ and define the measure $m$ on $[a , b ]$ by 
\begin{align*}
    m(A) := \textup{V}(g^{-1}(A)), \quad \textup{for any Borel measurable }A \subset [a, b],
\end{align*}
where we denote $\textup{V}$ to be the Riemannian measure on $M.$
This implies that for any bounded and measurable function $f$ on $[a, b],$ we get  
\begin{align*}
    \int_{a}^b f (s)\, dm(s)= \int_{{}M} f(g(x))\, d\textup{V}(x).
\end{align*}
\begin{proposition}\label{prop4.2}
Using the notation from above, assume that  $1-\delta< J < 1+\delta$, $u$ and $w$ satisfy the condition of Theorem \ref{thm: gradient comp}.  Then 
    the function \[E(s) := -\exp\left((1-\delta)\lambda_1\int _{t_0}^s\frac{w(t)}{w'(t)}\,dt\right)\int_a^sw(r)\, dm(r)\]
    is increasing on $(a,t_0]$ and decreasing on $[t_0, b).$
\end{proposition}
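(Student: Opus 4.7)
The plan is to differentiate $E$ directly and show that $E'(s) = -w(s)\exp(F(s))\,\Phi(s)$, where $F(s) := (1-\delta)\lambda_1\int_{t_0}^s w(t)/w'(t)\,dt$ is the exponent appearing in the definition of $E$ and $\Phi(s) \geq 0$. Since $w' \geq 0$ on $[a,b]$ with $w(a) = -1$ and $w(t_0) = 0$, one has $w < 0$ on $(a,t_0)$ and $w > 0$ on $(t_0,b)$, so the factor $-w(s)$ has the correct sign on each subinterval to deliver the claimed monotonicity.

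Writing $I(s) := \int_a^s w(r)\,dm(r)$, the product rule yields
\begin{equation*}
    E'(s) = -w(s)\exp(F(s))\left[\frac{(1-\delta)\lambda_1}{w'(s)}\, I(s) + m'(s)\right].
\end{equation*}
I would then rewrite each term inside the bracket via geometric identities on $M$. The co-area formula applied to $g = w^{-1}\circ u$, valid for a.e.\ $s$ by Sard's theorem, gives
\begin{equation*}
    m'(s) = w'(s)\int_{\{g=s\}} \frac{dA}{|\nabla u|}.
\end{equation*}
Meanwhile, integrating the eigenvalue equation $-\Delta u = \lambda_1 u$ over the sublevel set $\{g \leq s\}$ and using the divergence theorem (the Neumann condition on $\partial M$ kills any boundary contribution on $\partial M \cap \{g \leq s\}$) produces
\begin{equation*}
    \lambda_1 I(s) = \lambda_1 \int_{\{g \leq s\}} u\,dV = -\int_{\{g \leq s\}} \Delta u\,dV = -\int_{\{g=s\}} |\nabla u|\,dA.
\end{equation*}
Substituting both identities yields $\Phi(s) = w'(s)\int_{\{g=s\}} dA/|\nabla u| - (1-\delta)\,(w'(s))^{-1}\int_{\{g=s\}} |\nabla u|\,dA$.

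The last step is to show $\Phi(s) \geq 0$, which is where the gradient comparison and the closeness of $J$ to $1$ intervene. On $\{g=s\}$, Theorem \ref{thm: gradient comp} gives $|\nabla u| \leq w'(s)/\sqrt{J}$ and hence $|\nabla u|^{-1} \geq \sqrt{J}/w'(s)$; combining with the hypothesis $J > 1 - \delta$ yields
\begin{equation*}
    \Phi(s) \geq \int_{\{g=s\}} \sqrt{J}\,dA - (1-\delta)\int_{\{g=s\}} \frac{dA}{\sqrt{J}} = \int_{\{g=s\}} \frac{J-(1-\delta)}{\sqrt{J}}\,dA \geq 0.
\end{equation*}
The main obstacle is the careful measure-theoretic justification of the co-area and divergence identities (needed because $g$ is only smooth on the set of regular values, and one should first restrict to such $s$ and then pass to general $s$ by continuity of the relevant monotone quantities). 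Conceptually, the key observation is that the constant $1-\delta$ in the exponent of $E$ is chosen precisely to absorb the deviation of $J$ from $1$ guaranteed by Lemma \ref{lem: J}, which is in turn what makes the choice $\overline\lambda = (1+2\delta)\lambda_1$ compatible with the argument.
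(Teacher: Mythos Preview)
Your argument is correct and gives a genuinely different route from the paper's. The paper follows the Bakry--Qian test-function method: it fixes an arbitrary nonnegative smooth $H$ with compact support in $(a,b)$, builds $G$ and $F$ via $G(w)'=H$ and $F+tF'=G$, integrates $\Delta(uF(u))=0$, and after applying the gradient comparison together with $J^{-1}\leq(1-\delta)^{-1}$ arrives at the measure inequality $(1-\delta)^{-1}w'\,dm-A\,ds\geq 0$ with $A(s)=-\lambda_1\int_a^s w\,dm$. Rewriting this as $dA+(1-\delta)\lambda_1\tfrac{w}{w'}A\,ds\geq 0$ on $(a,t_0]$ (and the reverse sign on $[t_0,b)$) yields the monotonicity of $E$ in a distributional sense, with no appeal to Sard or to the absolute continuity of $m$.

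Your approach instead differentiates $E$ pointwise at regular values using the coarea formula for $m'(s)$ and the divergence theorem on $\{u\leq w(s)\}$ for $\lambda_1 I(s)$, and then closes with the clean pointwise inequality $\sqrt{J}-(1-\delta)/\sqrt{J}\geq 0$. This is more transparent geometrically and makes the role of the constant $1-\delta$ very visible; the price is the measure-theoretic housekeeping you flag (restrict to regular values, then use monotonicity/continuity to extend). The paper's weak formulation sidesteps that regularity bookkeeping entirely, which is its main advantage; on the other hand, your proof avoids constructing the auxiliary antiderivatives $G$ and $F$ and checking $\overline g(u)\geq 0$. Both arguments exploit exactly the same two inputs, $J|\nabla u|^2\leq (w')^2\circ w^{-1}(u)$ and $J>1-\delta$, so the substance is the same even though the packaging differs.
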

\begin{proof}
    Choose smooth nonnegative function $H(s)$ with compact support in $(a,b)$.  Define $G:[-1,w(b)]\to \mathbb{R}$ by
\begin{align*}
    \frac{d}{dt}[G(w(t))]=H(t), \quad G(-1)=0.
\end{align*}
Choose a function $F$ that satisfies $F(t)+tF'(t)=G(t)$.  Then
\begin{align*}
\Delta( uF(u)) &= \diver( (\nabla u)F+uF'\nabla u)\\
&=(\Delta u)F +2|\nabla u|^2F'+uF''|\nabla u|^2+uF'\Delta u\\
&= (\Delta u)(F+uF')+|\nabla u|^2(uF''+2F')\\
&=G(u)\Delta u + |\nabla u|^2G'(u).
\end{align*}
Since $\int_M\Delta(uF(u))d\textup{V}=0$,
\begin{align*}
\int_M (\Delta u)G(u)d\textup{V} = -\int_MG'(u)|\nabla u|^2d\textup{V}
\end{align*}
Apply the gradient comparison $|\nabla u|^2\leq J^{-1}(w'\circ w^{-1})^2(u)$ along with the eigenvalue equation so that
\begin{align*}
    \lambda_1 \int_M uG(u)d\textup{V} \leq \int_M J^{-1}G'(u)(w'\circ w^{-1})^2(u)d\textup{V} \leq \int_M (1-\delta)^{-1}G'(u)(w'\circ w^{-1})^2(u)d\textup{V},
\end{align*}
where the last inequality follows from the fact that $G'(u) \geq 0$ on $M.$
Since $w(g)=u$, we get 
\begin{align*}
    \lambda_1\int_M w(g)G(w(g))d\textup{V} \leq \int _M (1-\delta)^{-1}G'(u)(w'\circ w^{-1})^2(u)d\textup{V}.
\end{align*}
By definition of $m$ and since $a \leq g \leq b$,
\begin{align*}
    \lambda_1 \int_a^b v(s)G(v(s))dm(s)\leq \int_a^b (1-\delta)^{-1}G'(w(s))(w'(s))^2dm(s).
\end{align*}
Since $G'(w(s))(w'(s))^2=H(s)w'(s)$ and $G(w(s))=\int_a^sH(r)dr$, we have
\begin{align*}
    \int_a^b\left(\int_s^b\lambda_1 w(r)m(dr)\right)H(s)ds \leq \int_a^b(1-\delta)^{-1}H(s)w'(s)m(ds).
\end{align*}
Using the fact that $\int_Mu dV = 0$, we have $\int_a^b w(r)dm(r) = 0$ so that $\int_s^bw(r)\,dm(r) = -\int_a^sw(r)\,dm(r)$.  Let $A(s)=-\lambda_1 \int_a^sw(r)\,dm(r)$.  Then
\begin{align*}
    \int_a^b (1-\delta)^{-1}H(s)w'(s)dm(s)-H(s)A(s)\,ds \geq 0
\end{align*}
for any positive $H(s)$. 
Thus we obtain that 
\[ (1-\delta)^{-1}w'dm-A ds\geq 0.\]
Finally, we may rewrite this as follows 
\[ -\frac{(1-\delta)^{-1}}{\lambda_1}\frac{w'}{w}dA-Ads \geq 0, \]
which then, in-turn, implies that on $(a,t_0],$ since $ w\leq 0,$ $w'\geq 0$
we get that  
\[ dA+(1-\delta)\lambda_1\frac{w}{w'}Ads \geq 0. \]
This implies that the function 
\[E(s) =  A(s) \exp\left((1-\delta)\lambda_1\int_{t_0}^s\frac{w}{w'}\, dr\right)\]
is increasing on $(a,t_0]$ and decreasing on $[t_0,b).$
\end{proof}
Now with this proposition at hand, it is easy to show the following key proposition. 

Observe that the eigenvalue $\overline \lambda$ is different from $(1-\delta)\lambda_1$ (see Theorem \ref{thm: gradient comp}) which is why we have to modify the approach slightly. 
\begin{proposition}\label{prop:4.3}
Let $q = \frac{(1-\delta)\lambda_1}{\overline{\lambda}}$. For $\epsilon > 0$ sufficiently small, there exists a constant $C > 0$ independent of $\epsilon$ such that
$$ \int_{\{u \le -1+\epsilon\}} (-u) \, dV \le C \left( \int_{\{w \le -1+\epsilon\}} (-w) \, d\nu \right)^q, $$
where $\nu$ is the measure $d\nu(t) = \mu_{\overline n,\overline K}(t)dt$. Here $\mu_{\overline n,\overline K}(t)$ is the integrating factor satisfying $(\mu_{\overline n, \overline K}w')' = -\overline{\lambda}\mu_{\overline n, \overline K}w$.
\end{proposition}
Note that from our choice of $T_{\overline{n},\overline{K}}$, we have
\begin{align*}
\mu_{\overline{n},\overline{K}}(t)=\begin{cases}
        \cos^{\bar{n}-1}(\sqrt{\bar{K}}t), & \text{ if }\bar{K}> 0\\
        \sinh^{\bar{n}-1}(\sqrt{-\bar{K}}t), &\text{ if }\bar{K}<0
    \end{cases}
\end{align*}
which are both positive functions. 
\begin{proof}[Proof of Proposition \ref{prop:4.3}]
Let $A(s) = \int_a^s (-w(r)) \, dm(r) = \int_{\{u \le w(s)\}} (-u) \, dV$ and $B(s) = \int_a^s (-w(t)) \mu_{\overline n,\overline K}(t) \, dt$.
From the eigenvalue equation $(\mu_{\overline n,\overline K} w')' = -\overline{\lambda} \mu_{\overline n,\overline K} w$, we integrate from $a$ to $s$ to obtain:
$$ \mu_{\overline n,\overline K}(s)w'(s) = \overline{\lambda} B(s). $$
Taking the logarithmic derivative, we get
\begin{align*}
    \frac{(\mu_{\overline n,\overline K} w')'}{\mu_{\overline n, \overline K} w'} = \frac{-\overline{\lambda} \mu_{\overline n,\overline K} w}{\mu_{\overline n,\overline K} w'} = -\overline{\lambda} \frac{w}{w'}.
\end{align*}
Integrating from $s$ to $t_0$ and taking the exponent, we have
\begin{align*}\exp\left( \int_{t_0}^s \frac{w(t)}{w'(t)} \, dt \right) = \left( \frac{\mu_{\overline n,\overline K}(t_0) w'(t_0)}{\mu_{\overline n,\overline K}(s) w'(s)} \right)^{\frac{1}{\overline{\lambda}}} = \left( \frac{B(t_0)}{B(s)} \right)^{\frac{1}{\overline{\lambda}}}.
\end{align*}
By Proposition \ref{prop4.2}, the function
$$ E(s) = A(s) \exp\left( (1-\delta)\lambda_1 \int_{t_0}^s \frac{w(t)}{w'(t)} \, dt \right) $$
is increasing on $(a, t_0]$. Therefore, for $s \le t_0$, $E(s) \le E(t_0) = A(t_0)$. Substituting our expression for the exponential factor into $E(s)$ gives:
$$ A(s) \left( \frac{B(t_0)}{B(s)} \right)^{\frac{(1-\delta)\lambda_1}{\overline{\lambda}}} \le A(t_0). $$
Rearranging this, we get $A(s) \le A(t_0) B(t_0)^{-q} B(s)^q = C B(s)^q$, where $q = \frac{(1-\delta)\lambda_1}{\overline{\lambda}}$. Evaluating this at $s = w^{-1}(-1+\epsilon)$ finishes the proof.
\end{proof}

\begin{lemma}\label{lem: ball contained in preimage lemma}
The preimage $u^{-1}\left([-1,-1+\eps)\right)$ contains a ball of radius $r_{\varepsilon},$ which is determined by 
\begin{align*}
    r_{\varepsilon}=\sqrt{1-\delta}\left(w^{-1}\left(-1+{\varepsilon}\right)-a\right)
\end{align*} 
\begin{proof}
  Let $x_0$ be a minimum point of $u$, i.e., $u(x_0)=-1$.  Let $\bar{x}$ be another point on $M$.  Let $\gamma:[0,L]\to M$ be a unit speed minimizing geodesic from $x_0$ to $\bar{x}$, and define $f = u(\gamma(t))$.  Then by our gradient estimate we have
  \begin{align*}
      |f'(t)| = |\langle \nabla u(\gamma(t)),\gamma'(t)\rangle|\leq |\nabla u(\gamma(t))| \leq \frac{w'(w^{-1}(f(t)))}{\sqrt{J}}.
  \end{align*}
  From this we get (since $w' \geq 0$)
  \begin{align*}
      \frac{d}{dt}w^{-1}(f(t))\leq \frac{1}{\sqrt{(1-\delta)}},
  \end{align*}
  which implies 
  \[a\leq w^{-1}(f(t))\leq a + \frac{t}{\sqrt{(1-\delta)}}. \]
  Then since $w'$ is increasing in a neighborhood of $a,$ we can deduce that
  \begin{align*}
      w'(w^{-1}(f(t)))\leq w'\left(a + \frac{t}{\sqrt{(1-\delta)}}\right) \quad \textup{for }t \textup{ close to }0.
  \end{align*}
 From the above inequalities, and noting $f(0)=-1$, we have
  \begin{align*}
      |f(t)+1| \leq \frac{1}{\sqrt{1-\delta}}\int_{0}^tw'\left(a+\tfrac{s}{\sqrt{(1-\delta)}}\right)\, ds = w\left(a+\tfrac{t}{\sqrt{(1-\delta)}}\right)+1
  \end{align*}
  This leads us to deduce that if $t =d(x,x_0) <\sqrt{1-\delta}\left(w^{-1}\left(-1+{\varepsilon}\right)-a\right)$ then $u(x) < -1 +\varepsilon.$
\end{proof}
\end{lemma}
We are now in the position to prove the maximum comparison.

Recall also that $a=-\frac{\pi}{2\sqrt{K}}$ when $K>0$ and $a=0$ for $K\leq 0$
\begin{proof}[Proof of Theorem \ref{thm: maximum comparison}]
By contradiction, suppose that $\max u < m_{\overline{n},\overline K}$. Fix $\tau\in (a,t_0)$ sufficiently close to $a$, and set $h_\tau:=w(\tau)=-1+\eps_\tau$.  For $\tau$ close enough to $a$, we have $h_\tau<-\frac{1}{2}$. By Proposition \ref{prop:4.3}, we have:
\begin{align*}
    \vol(\{u \le h_\tau\}) \le -2 \int_{\{u \le h_\tau\}} u \, dV \le 2C' \left( \int_{\{w \le h_\tau\}} (-w) \, d\nu \right)^q \le 2C' \nu([a, \tau])^q
\end{align*}
From Lemma \ref{lem: ball contained in preimage lemma}, we infer that the set $\{u \le -1+\eps_\tau\}$ contains a ball $B(x_0, r_\tau)$ of radius $r_\tau = \sqrt{1-\delta}(\tau - a)$. Thus:
\begin{align*}\vol(B(x_0, r_\tau)) \le 2C' \nu([a, \tau])^q = 2C' \left( \int_a^\tau \mu_{\overline n,\overline K}(t) \, dt \right)^q.
\end{align*}
Near $t = a$, the density $\mu_{\overline{n},\overline K}(t)$ behaves like $(t-a)^{\overline n-1}$ regardless of the sign of $\overline K$. Thus, the integral is bounded by $C'' (\tau-a)^{\overline n}$, which gives:
$$ \vol(B(x_0, r_\tau)) \le C'' (\tau-a)^{\overline{n}q} = \tilde{C} r_\tau^{\overline{n}q}. $$

We now show that $\overline{n}q > n$ for $\delta$ sufficiently small. Recall that $\overline{\lambda} = (1+2\delta)\lambda_1$. Let $\delta_1 = \frac{\delta}{1+2\delta}$, which means $q = \frac{1-\delta}{1+2\delta} = 1 - 3\delta_1$. In Theorem \ref{thm: gradient comp}, we defined $\overline{n} = \frac{n - \eta\delta_1}{1 - \eta\delta_1}$ with $\eta = \frac{(n+1)^2}{n-1}$. For small $\delta_1 > 0$, we have:
$$ \overline{n}q = \frac{n - \eta\delta_1}{1 - \eta\delta_1} (1 - 3\delta_1) = \frac{n - (\eta+3n)\delta_1 + 3\eta\delta_1^2}{1 - \eta\delta_1}. $$
For $\overline{n}q > n$ to hold, we require \[n - (\eta+3n)\delta_1 + 3\eta\delta_1^2 > n - n\eta\delta_1\], which simplifies to $\eta(n-1) - 3n + 3\eta\delta_1 > 0$. Using our choice of $\eta$, we have \[\eta(n-1) - 3n = (n+1)^2 - 3n = n^2 - n + 1\]. Since $n \ge 2$, we have $n^2 - n + 1 \ge 3 > 0$. Therefore, $\overline{n}q > n$ holds strictly for all sufficiently small $\delta > 0$. 

The inequality $\vol(B(x_0, r_\tau)) \le \tilde{C} r_\tau^{\overline{n}q}$ with $\overline{n}q > n$ yields a contradiction to the fact that for an $n$-dimensional manifold (possibly with convex boundary), $\vol(B(x_0, r)) \ge c r^n$ for small $r$ (because $r^{\overline{n}q} \ll r^n$ as $r \to 0$). This proves that $u^* \ge m_{\overline{n},\overline K}$.
\end{proof}

\subsection{Maximum Matching}
We now show that \eqref{eqn: max match} holds. To infer this, we again distinguish the cases $\overline K >0$ and $\overline K<0$ in the proof below.  Recall that $a=-\frac{\pi}{2\sqrt{\bar{K}}}$ when $\bar K>0$ and $a=0$ when $\bar{K}<0$.

\begin{theorem}\label{thm: maximum matching theorem}Given $\delta >0,$ there exists $\varepsilon>0, \overline K$ and $\overline{n}$ (as in Theorem \ref{thm: gradient comp} and in \eqref{eq: choice over overline K for K>0}) such that whenever $\kappa(p,K) <\varepsilon$, there is $T$ satisfying \eqref{eq: equation for T}, an interval $I = [\bar{a},b(\bar a)]$ and a corresponding Neumann eigenfunction $w = w^{\overline \lambda}_{T,a}$ on $I$ solving the problem \eqref{model: kroger}, such that 
\begin{align}\label{eq: max and min matching}
    u^\star = \max_I w, \quad -1 = \min u= \min_I w. 
\end{align}
\end{theorem}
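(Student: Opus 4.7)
The idea is to fix $N,\overline K$, and $\overline\lambda=(1+2\delta)\lambda_1$ as produced by Theorem~\ref{thm: gradient comp} (with $\overline K$ replaced by the smaller value in \eqref{eq: choice over overline K for K>0} when $\overline K>0$) and to tune the starting point $a$ — and, in the case $\overline K<0$, the choice of solution $T$ of \eqref{eq: equation for T} — so that the model eigenfunction $w=w^{\overline\lambda}_{T,a}$ on $I=[a,b]$ with $b=a+d(a,T,\overline\lambda)$ satisfies $\max_I w=u^\star$. Minimum matching is automatic: from $w(a)=-1$, $w'(a)=0$ and the ODE in \eqref{model: kroger} one computes $w''(a)=\overline\lambda>0$, so $w'$ grows positive from zero, and by the definition of $d(a,T,\overline\lambda)$ it stays positive on $I$. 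Hence $w$ is strictly increasing on $I$, giving $\min_I w=w(a)=-1=\min u$ and $\max_I w=w(b)$; only the maxima need to be matched.

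Set $M(a,T):=w^{\overline\lambda}_{T,a}(a+d(a,T,\overline\lambda))$. By continuous dependence of ODE solutions on parameters, $M$ is continuous on its natural domain. The strategy is to exhibit a continuous one-parameter family of admissible $(a,T)$ along which $M$ takes values on both sides of $u^\star$, and to conclude by the intermediate value theorem.

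For $\overline K>0$, fix $T=T_{N,\overline K}$ and let $a$ range over $\bigl(-\tfrac{\pi}{2\sqrt{\overline K}},\,-\tfrac{d_{N,\overline K,\overline\lambda}}{2}\bigr]$. At $a=-d_{N,\overline K,\overline\lambda}/2$ the interval is the symmetric one $[-d/2,d/2]$; since $T_{N,\overline K}$ is odd, the first Neumann eigenfunction is antisymmetric, so $M=1\geq u^\star$. As $a$ decreases to $-\tfrac{\pi}{2\sqrt{\overline K}}$, the model degenerates to the extremal configuration underlying Theorem~\ref{thm: maximum comparison}, so $M(a)\to m_{N,\overline K}\leq u^\star$; IVT produces the desired $a_\ast$. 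For $\overline K<0$, the symmetric choice $T=T_{N,\overline K}$ with $a=-d_{N,\overline K,\overline\lambda}/2$ again yields $M=1\geq u^\star$, while the extremal value $m_{N,\overline K}\leq u^\star$ is attained by the different Ricatti solution $T=\overline T_{N,\overline K}$ with $a=0^{+}$. To bridge these two configurations continuously, parameterize solutions of \eqref{eq: equation for T} by the initial value $T(0)$: this parameter sweeps continuously from the bounded tanh-type family ($T(0)=0$) through the lower equilibrium $T(0)=-(N-1)\sqrt{-\overline K}$ to the singular shifted coth family (with $T(0)\to-\infty$ as the singularity approaches $t=0^{-}$). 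Coupling this sweep with a suitable choice of $a$ produces a continuous path of admissible models joining the symmetric tanh configuration to the coth extremum, and IVT along this path yields the desired $(T_\ast,a_\ast)$.

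The main obstacle is continuity of $M$ at the singular extremes, where $T$ blows up and the effective ODE domain shrinks. Verifying $M(a)\to m_{N,\overline K}$ as $a$ approaches the singular boundary requires a careful local analysis near the blow-up (e.g.\ via a rescaling argument, or by working with the natural parameterization of Ricatti solutions by their initial values), and in the case $\overline K<0$ one must further check that the transition from bounded to singular Ricatti solutions — passing through the constant equilibrium — really does preserve continuity of $M$ as a function of $T(0)$. Once these continuity statements are in hand, the intermediate value theorem delivers \eqref{eq: max and min matching} immediately.
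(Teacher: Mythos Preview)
Your proposal is correct and follows essentially the same intermediate-value-theorem strategy as the paper: both vary the starting point $a$ (and, for $\overline K<0$, the choice of Ricatti solution $T$) over a connected family along which $M(a,T)$ takes values on both sides of $u^\star$. The paper's implementation differs in two respects worth noting. First, for $\overline K>0$ the paper obtains the upper IVT endpoint not from the symmetric interval but from the reflection $w_-(x)=-w(-x)/m_{N,\overline K}$ on $[-b,-a]$, which has maximum $1/m_{N,\overline K}\geq 1\geq u^\star$; either choice works. Second, and more substantively, the paper does not attempt to prove the continuity-through-singularity statements you correctly identify as the main obstacle: for $\overline K>0$ it simply cites \cite{bakry2000some}*{Section 3} for continuity of the maximum even through singular points of $T$, and for $\overline K<0$ it splits according to whether $\overline\lambda\leq -\tfrac{(N-1)^2\overline K}{4}$ or not and invokes Propositions~28 and~32 of Naber--Valtorta~\cite{naber2014sharp}, which already supply the required existence of a matching model. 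Your Ricatti-sweep parameterization by $T(0)$ is a plausible direct route to the same conclusion, but making it rigorous would require exactly the local analysis near blow-up that you flag, whereas the paper sidesteps this by appealing to the established ODE literature.
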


\begin{proof} 
We divide this proof into two cases.

\textit{Case 1:} Assume $\overline K >0.$ By our choice \eqref{eq: choice over overline K for K>0}, and in view of Aubry's estimate \eqref{ineq: aubry} we have that 
\begin{align}\label{ineq: eigenvalue comparison to Aubry}
    \overline \lambda > \overline{n}\overline K = \lambda_1\left(T_{\overline{n}, \overline K},\left[-\tfrac{\pi}{2\sqrt{\overline K}},\tfrac{\pi}{2\sqrt{\overline K}}\right]\right),
\end{align}
 where $\lambda_1\left(T_{\overline{n},\overline K},I_{\overline K}\right)$ is the first (non-trivial) Neumann eigenvalue of the operator $\mathcal L_{T_{\overline{n},\overline K}}$ on the interval \[I_{\overline K} : = \left[-\tfrac{\pi}{2\sqrt{\overline K}},\tfrac{\pi}{2\sqrt{\overline K}}\right].\]
 
If $b(a)\geq \frac{\pi}{2\sqrt{\overline K}}$, then by domain monotonicity, 
 \begin{align*}
     \lambda_1(T_{\bar{n},\bar{K}},I_{\bar{K}})\geq \lambda_1(T_{\bar{n},\bar{K}},[a,b(a)])=\overline \lambda,
 \end{align*}
 contradicting \eqref{ineq: eigenvalue comparison to Aubry}, hence $b(a)<\frac{\pi}{2\sqrt{\bar{K}}}$.

 If $u^\star = 1,$ we can choose the solutions given by \eqref{model: kroger} and choosing $T_{\overline{n},\overline K}$ as in \eqref{def: of T} and find an $-\tfrac{\pi}{2\sqrt{\overline K}}<\overline a<0$ such that the Neumann eigenvalue of the operator $\mathcal L_{T_{\overline{n},\overline K}}$ is equal to $\overline \lambda$ on a symmetric interval $[\overline a, -\overline a].$ Note that the corresponding eigenfunction $w$ is then an odd function and hence also satisfies $\max w = 1.$

  Next we assume that $u^\star<1.$ In view of Theorem \ref{thm: maximum comparison}, $u^\star<\frac{1}{u^\star}\leq \frac{1}{m_{\bar{n},\bar{K}}}$. Denote $w_a$ to be the solution starting at $a.$ Then 
    \begin{align*}
        w_{-}(x) = -\frac{w_a(-x)}{m_{\overline{n}, \overline K}}
    \end{align*}
    solves the initial value problem \eqref{model: kroger}, starting on $[-b,-a]$ and has maximum value $\tfrac{1}{m_{\overline{n}, \overline K}}.$  Note that $[-b,-a]\subset I_{\bar{K}}$. Since the maximum continuously depends on the starting point of the initial value problem even through the singularity points of $T_{\bar{n},\bar{K}}$, see (\cite{bakry2000some}, Section 3, Corollary 1), there exists an $\bar a \in I_{\overline K}$ and $\bar a< b(\bar a) \in I_{\overline K} $ such that the corresponding Neumann eigenfunction on $[\bar a,b(\bar a)]$ has Neumann eigenvalue $\overline \lambda$ and such that \eqref{eq: max and min matching} holds. This finishes the proof for $\overline K>0.$

    \textit{Case 2:} Assume $\overline K <0.$ 
    As in the case $
    \overline K >0,$ we know that if $u^\star = 1,$ we can find $\overline a <0$ such that $w = w^{\overline \lambda}_{T_{\overline{n},\overline K}, \overline a}$ is an odd function and Neumann eigenfunction on an interval $[-\overline a, \overline a].$
    
    To proceed with the proof, we distinguish two cases: $\overline \lambda \leq -\frac{(\overline{n}-1)^2\overline K}{4}$ and $\overline \lambda > -\frac{(\overline{n}-1)^2\overline K}{4}.$

    \textit{Case 2.1:} $\overline \lambda \leq -\frac{(\overline{n}-1)^2\overline K}{4}.$ In that case it follows from Proposition 28 in \cite{naber2014sharp} that there is an $a> \overline a$ such that the Neumann eigenfunction $w = w^{\overline \lambda}_{T_{\overline{n},\overline K}, a}$ on the interval $[a,b(a)]$ satisfies \eqref{eq: max and min matching}. 

    \textit{Case 2.2:} $\overline \lambda > -\frac{(\overline{n}-1)^2\overline K}{4}.$ From Proposition 32 in \cite{naber2014sharp}, we know that for any $u^\star \in [m_{\overline{n}, \overline K}, 1]$ \eqref{eq: max and min matching} holds. On the other hand, Theorem \ref{thm: maximum comparison} shows that $u^\star \geq m_{\overline{n}, \overline K}.$ Hence the proof is complete.
\end{proof}
\section{Proof of Theorem \ref{thm: main theorem}}\label{sec: diameter comparison}
In this section, we show the diameter comparison and obtain the spectral gap comparison.
\begin{proof}[Proof of Theorem \ref{thm: main theorem}]
    Let $u$ denote the first non-trivial eigenfunction on $M$ and $\lambda_1$ the corresponding eigenvalue. We assume that $u$ is scaled such that $-1 = \min u < \max u = u^\star \leq 1.$ By Theorem \ref{thm: maximum matching theorem}, we can find an interval $[a,b],$ a function $T = T_{\overline{n},\overline K}$ such that the corresponding Neumann eigenfunction $w$ (i.e. solving \eqref{model: kroger} with $w'(a) = w'(b) = 0$) on that interval satisfies 
    \[\max u = \max_{[a,b(a)]}w.\]
    Now consider a normalized, minimizing geodesic $\gamma: [0,l]\rightarrow M$ connecting the minimum point $x_0$ and the maximum point $y_0.$ Let $f(t) = u(\gamma(t))$ and choose $I \subset [0,l]$ in such a way that $I \subset f'^{-1}(0,\infty)$ and $f^{-1}$ is well-defined in a subset of full measure of $[-1, u^\star].$ Then, by the change of variables formula, we get 
    \begin{align*}
        D \geq \textup{diam}(M)  \geq \int_0^l dt \geq \int_{I}\,dt  &=\int_{-1}^{u^\star}\frac{dy}{f'(f^{-1}(y))} \\ &\geq \int_{-1}^{u^\star}\frac{dy}{\sqrt{1+\delta}w'(w^{-1}(y))}\\
       &=\frac{1}{\sqrt{1+\delta}} \int_{a}^{b(a)}1\,dt\\
       & = \frac{ d (T, a, \overline \lambda)}{\sqrt{1+\delta}}, 
    \end{align*}
    where as before $ d (T, a, \overline \lambda) = b(a)-a.$ By Proposition \ref{thm: central interval}, we know $ d (T, a, \overline \lambda)\geq d_{\overline{n},\overline  K, \overline \lambda}.$
By \eqref{ineq: domain monotonicity}  $d_{\overline{n},\overline  K, \overline \lambda}$ is decreasing in $\overline \lambda, $ which is why there exists $C_1 = C_1(\delta)>1$ such that $C_1(\delta)\rightarrow 1$ as $\delta\rightarrow 0^+$ and such that 
\begin{align*}
    \frac{ d_{\overline{n},\overline  K, \overline \lambda}}{\sqrt{1+\delta}} =   d_{\overline{n},\overline  K,C_1 \overline \lambda}.
\end{align*}
Note that since $\overline{n} \rightarrow n,$ $\overline K \rightarrow K$ as $\delta\rightarrow 0^+,$ we can choose $C_2(\delta)>0 $ such that yet $C_2(\delta) \rightarrow 1$
 as $\delta \rightarrow 0^+$ and 
 \begin{align*}
   d_{\overline{n},\overline  K,C_1 \overline \lambda} = d_{n,K,C_1 C_2 \overline \lambda}.
\end{align*}
Since $d_{\overline{n},\overline  K,C_1 \overline \lambda}$ is a strictly decreasing and continuous function in $\overline \lambda,$ we conclude that 
    \begin{align*}
     C_1C_2\overline \lambda \geq  \lambda_1( n,  K, D).
    \end{align*}
    In other words, since $\overline \lambda  = (1+2\delta) 
    \lambda_1,$ we have that for any $\alpha \in (0,1),$ there exists $\delta >0$ such that \begin{align*}
        \lambda_1 \geq \alpha  \lambda_1(n,K,D),
    \end{align*}
    as desired.
    Finally, note that if $\delta = 0,$ i.e. if $\kappa(p,K) =0,$ we recover the sharp estimate $\lambda_1 \geq \lambda_1(n,K,D)$.
\end{proof}

\newpage

\appendix
\section{A global Neumann Sobolev inequality under integral Ricci curvature bounds}\label{appendix}

In this appendix we prove the global Neumann Sobolev inequality, as well as a Neumann Poincar\'e inequality, which are needed for the Moser iteration used in the proof of Lemma~\ref{lem: J}. The argument follows the appendix of \cite{RamosSeto} in the Riemannian setting; see also \cite{gallot1988isoperimetric,petersen1998integral,WangWei,dai2018local} for related local Sobolev inequalities under integral curvature bounds.  The only new point is the treatment of the boundary; for this we assume that $\partial M$, if nonempty, is $C^2$ and convex. We use the averaged $L^q$ norm
\[
\|u\|_{q}^{*}:=\left(\fint_{M}|u|^q\,dv\right)^{1/q}.
\]

Our goal is to prove the following.

\begin{proposition}\label{prop:appendix-global-neumann-sobolev}
Let $p>\frac n2$. There exists $\eps=\eps(n,p,K,D)>0$ such that if $\kappa(p,K)\le \eps$, then there exists $C_S=C_S(n,p,K,D)$ such that for every $u\in W^{1,2}(M)$,
\begin{equation}\label{eq:appendix-target-sobolev-mean-zero}
\|u-u_M\|_{\frac{2p}{p-1}}^{*}
\le
C_S\,\|\nabla u\|_2^{*},
\qquad
u_M:=\fint_M u\,dv.
\end{equation}
Consequently,
\begin{equation}\label{eq:appendix-target-sobolev}
\|u\|_{\frac{2p}{p-1}}^{*}
\le
C_S\|\nabla u\|_{2}^{*}+\|u\|_{2}^{*}.
\end{equation}
\end{proposition}

The following standard result is key in being able to prove the Sobolev inequality when $\partial M\neq\emptyset$.

\begin{lemma}\label{lem:appendix-convex-geodesics}
If $x,y\in \operatorname{int}(M)$, then every minimizing geodesic joining $x$ and $y$ is contained in $\operatorname{int}(M)$.
\end{lemma}

\begin{proof}
Since $M$ is compact, it is complete as a metric space, hence any two points can be joined by a minimizing geodesic for the intrinsic distance. Let $\gamma:[0,\ell]\to M$ be such a minimizing geodesic joining $x$ and $y$.

Assume by contradiction that $\gamma(t_0)\in \partial M$ for some $t_0\in (0,\ell)$. Let $t_0$ be the first such time. Since $\gamma([0,t_0))\subset \operatorname{int}(M)$ and $\gamma$ is minimizing, the tangent vector $\gamma'(t_0)$ must be tangent to $\partial M$ at $\gamma(t_0)$. Convexity of $\partial M$ then implies that one can shorten the portion of $\gamma$ near $t_0$ by replacing it with a nearby curve lying in the interior of $M$, contradicting the minimizing property of $\gamma$.
\end{proof}

We first recall the pointwise Jacobian comparison and volume comparison estimates under integral Ricci curvature bounds; see \cite{petersen1997relative}. Let $V_K^n(r)$ denote the volume of the radius-$r$ ball in the simply connected $n$-dimensional space form of constant sectional curvature $K$.

\begin{lemma}\label{lem:appendix-local-jacobian}
Let $p>\frac n2$, and write the volume form in geodesic polar coordinates around
an interior point $x\in \operatorname{int}(M)$ as
\[
dv=\mathcal{A}(r,\theta)\,dr\,d\theta_{n-1}.
\]
Let
\[
dv_K=\mathcal{A}_K(r)\,dr\,d\theta_{n-1}
\]
denote the corresponding model volume form in the simply connected
$n$-manifold of constant sectional curvature $K$. Then for each fixed
$\theta\in S^{n-1}$,
\begin{equation}\label{eq:appendix-localvolcomp}
\frac{d}{dr}\left(\frac{\mathcal{A}(r,\theta)}{\mathcal{A}_K(r)}\right)
\le
\psi(r,\theta)\left(\frac{\mathcal{A}(r,\theta)}{\mathcal{A}_K(r)}\right),
\end{equation}
where $\psi$ satisfies
\begin{equation}\label{eq:appendix-psiestimate}
\int_0^r \psi(t,\theta)^{2p}\,\mathcal{A}(t,\theta)\,dt
\le
C(n,p)\int_0^r \rho_K(t,\theta)^p\,\mathcal{A}(t,\theta)\,dt.
\end{equation}
\end{lemma}

\begin{proof}
This follows from the proof of the relative volume comparison theorem in
\cite{petersen1997relative}; see also \cite[Lemma~A.2]{RamosSeto}.%Change citation
\end{proof}

\begin{theorem}\label{thm:appendix-vol-comp}
Let $p>\frac n2$. Then for $0<r\le R$ (and $R\le \pi/(2\sqrt K)$ if $K>0$), there exists $C=C(n,p,K,R)$ such that
\[
\left(\frac{\vol(B_x(R))}{V_K^n(R)}\right)^{\frac1{2p-1}}
-
\left(\frac{\vol(B_x(r))}{V_K^n(r)}\right)^{\frac1{2p-1}}
\le
C\,\kappa(p,K)^{\frac{p}{2p-1}}
\]
for all $x\in \operatorname{int}(M)$. In particular, there exists $\eps_0=\eps_0(n,p,K,D)>0$ such that if $\kappa(p,K)\le \eps_0$, then for all $x\in \operatorname{int}(M)$ and all $0<r_1\le r_2\le D$,
\begin{equation}\label{eq:appendix-doubling}
\vol(B_x(r_2))
\le
C_d(K,D,n)\left(\frac{r_2}{r_1}\right)^n \vol(B_x(r_1)),
\end{equation}
where one may take
\[
C_d(K,D,n)=
\begin{cases}
2, & K\ge 0,\\[1mm]
2e^{(n-1)\sqrt{|K|}D}, & K<0.
\end{cases}
\]
\end{theorem}

\begin{remark}
Although Theorem~\ref{thm:appendix-vol-comp} is stated for balls centered at interior points, which is sufficient for all applications below, such balls may meet
$\partial M$. In the presence of convex boundary, the proof
of the volume comparison theorem in \cite{petersen1997relative} still applies to intrinsic balls centered
at interior points, even when the ball meets the boundary, since the argument is radial and convexity
guarantees that minimizing geodesics issuing from the center remain in the interior of $M$.
\end{remark}

We now recall the analogue of Lemma~A.1 in \cite{RamosSeto}, which was proven in \cite[Lemma~(C)]{Gromov}.

\begin{lemma}\label{lem:appendix-A1}
Let $S$ be a hypersurface dividing $M$ into two parts $M_1$ and $M_2$. Let $W_i\subset M_i$
be Borel sets of positive measure. Then there exists $x_1\in W_1$ and a Borel set $W\subset W_2$ such that
\[
\vol(W)\ge \frac12 \vol(W_2),
\]
and for every $x\in W$, the unique minimizing geodesic from $x_1$ to $x$
meets $S$ at a point $z$ satisfying
\[
d(x_1,z)\ge d(x,z).
\]
\end{lemma}

\begin{lemma}\label{lem:appendix-A2}
Under the assumptions of Lemma~\ref{lem:appendix-A1}, let
\[
\sigma:=\sup_{x\in W} d(x_1,x),
\]
and let $S'\subset S$ be the set of intersection points with $S$ of minimizing geodesics joining $x_1$ to points of $W$. Then for every $p>\frac n2$,
\begin{equation}\label{eq:appendix-A2}
\vol(W)
\le
A_1(n,K,\sigma)\,\sigma\,\vol(S')
+
A_2(n,p,K,\sigma)\,\kappa(p,K)^{1/2}\,\vol(B_{x_1}(\sigma)),
\end{equation}
where
\[
A_1(n,K,\sigma)=
\begin{cases}
2^{n-1}, & K\ge 0,\\[1mm]
2^{n-1}\cosh^{n-1}\!\bigl(\sqrt{|K|}\sigma/2\bigr), & K<0,
\end{cases}
\]
and
\[
A_2(n,p,K,\sigma)=
\left(\frac{(n-1)(2p-1)}{2p-n}\right)^{1/2}A_1(n,K,\sigma).
\]
\end{lemma}

\begin{proof}
By Lemma~\ref{lem:appendix-A1}, for every $x\in W$ there is a unique minimizing
geodesic from $x_1$ to $x$, and its first intersection point $z$ with $S$ satisfies
\[
d(x_1,z)\ge d(x,z).
\]
By Lemma~\ref{lem:appendix-convex-geodesics}, these minimizing geodesics remain
in the interior of $M$. This is where the convexity of $\partial M$ is used.

Write the volume form in geodesic polar coordinates centered at $x_1$ as
\[
dv=\mathcal{A}(r,\theta)\,dr\,d\theta_{n-1}.
\]
Hence all points of $W$ can be parametrized in geodesic polar coordinates centered at $x_1$.
Let $\Gamma\subset S_{x_1}M$ be the set of unit vectors $\theta$ such that the radial geodesic
\[
\gamma_\theta(t)=\exp_{x_1}(t\theta)
\]
meets $W$. For $\theta\in \Gamma$, let:
\begin{itemize}
    \item $s_1(\theta)$ be the first parameter such that $\gamma_\theta(s_1(\theta))\in W$,
    \item $s_2(\theta)$ be the last parameter such that $\gamma_\theta(s_2(\theta))\in W$,
    \item $s(\theta)$ be the parameter such that $\gamma_\theta(s(\theta))\in S'$.
\end{itemize}
Then
\[
2s(\theta)\ge s_2(\theta)\ge s_1(\theta)\ge s(\theta).
\]

By Lemma~\ref{lem:appendix-local-jacobian}, the radial Jacobian satisfies
\[
\frac{\partial}{\partial r}\left(\frac{\mathcal{A}(r,\theta)}{\mathcal{A}_K(r)}\right)
\le
\psi(r,\theta)\frac{\mathcal{A}(r,\theta)}{\mathcal{A}_K(r)},
\]
where $\psi$ is controlled in $L^{2p}$ by $\kappa(p,K)^{1/2}$. Integrating from $s(\theta)$ to $s\in [s_1(\theta),s_2(\theta)]$ and using $s\le 2s(\theta)\le \sigma$, one gets
\[
\mathcal{A}(s,\theta)
\le
A_1(n,K,\sigma)\left(
\mathcal{A}(s(\theta),\theta)
+
\int_{s(\theta)}^s \psi(r,\theta)\mathcal{A}(r,\theta)\,dr
\right).
\]
Integrating over $W$ yields
\begin{align*}
\vol(W)
&=
\int_\Gamma \int_{s_1(\theta)}^{s_2(\theta)} \mathcal{A}(s,\theta)\,ds\,d\theta \\
&\le
A_1(n,K,\sigma)\,\sigma
\left(
\int_\Gamma \mathcal{A}(s(\theta),\theta)\,d\theta
+
\int_\Gamma \int_0^\sigma \psi(r,\theta)\mathcal{A}(r,\theta)\,dr\,d\theta
\right).
\end{align*}
As in \cite[Lemma~A.2]{RamosSeto}, the first term is bounded by $\vol(S')$, while the second is bounded by
\[
\left(\frac{(n-1)(2p-1)}{2p-n}\right)^{1/2}\kappa(p,K)^{1/2}\vol(B_{x_1}(\sigma)).
\]
This gives \eqref{eq:appendix-A2}.
\end{proof}

\begin{remark}
The role of the convexity of $\partial M$ is to guarantee that, by Lemma~\ref{lem:appendix-convex-geodesics}, minimizing geodesics joining interior points remain in the interior of $M$. This is what allows us in Lemma~\ref{lem:appendix-A2} to parametrize the relevant set by geodesic polar coordinates centered at $x_1$ and apply Jacobian comparison along minimizing geodesic rays.
\end{remark}

We now obtain the local relative isoperimetric estimate.

\begin{lemma}\label{lem:appendix-A3}
There exists $\eps=\eps(n,p,K,D)>0$ such that if $\kappa(p,K)\le \eps$, then for every ball $B_x(r)\subset M$ and every hypersurface $S$ dividing $B_x(r)$ into two subsets of equal volume,
\begin{equation}\label{eq:appendix-A3}
\vol(B_x(r))
\le
16A_1(n,K,D)\,r\,\vol(S\cap B_x(2r)).
\end{equation}
\end{lemma}

\begin{proof}
Let $M_1$ and $M_2$ be the two sides determined by $S$, and set
\[
W_i:=B_x(r)\cap M_i.
\]
Since $S$ divides $B_x(r)$ equally,
\[
\vol(B_x(r)\cap M_1)=\vol(B_x(r)\cap M_2)=\frac12\vol(B_x(r)).
\]
Apply Lemma~\ref{lem:appendix-A1} to $W_1$ and $W_2$. Then there exist $x_1\in W_1$ and $W\subset W_2$ such that
\[
\vol(W)\ge \frac12\vol(W_2)=\frac14\vol(B_x(r)).
\]
Hence
\[
\vol(B_x(r))\le 4\vol(W).
\]
Also $\sigma\le 2r$ and $S'\subset S\cap B_x(2r)$. By Lemma~\ref{lem:appendix-A2},
\[
\vol(B_x(r))
\le
8A_1(n,K,D)r\,\vol(S\cap B_x(2r))
+
4A_2(n,p,K,D)\kappa(p,K)^{1/2}\vol(B_x(2r)).
\]
Using the doubling estimate \eqref{eq:appendix-doubling}, the second term can be absorbed into the left-hand side if $\kappa(p,K)$ is sufficiently small. This gives \eqref{eq:appendix-A3}.
\end{proof}

For $1\le q\le \frac{n}{n-1}$, define the Neumann $q$-isoperimetric constant by
\[
I_N^q(M):=
\sup_S
\frac{\min\{\vol(M_1),\vol(M_2)\}^{1/q}}
{\vol(S)\vol(M)^{1-1/q}},
\]
where $S$ runs over all hypersurfaces dividing $M$ into two parts $M_1$ and $M_2$.

As in \cite{RamosSeto}, consider the following two properties:
\begin{itemize}
    \item[(i)] there is $C_d>0$ such that for all $x\in \operatorname{int}(M)$ and all $0<r_1\le r_2\le D$,
    \[
    \vol(B_x(r_2))
    \le
    C_d\left(\frac{r_2}{r_1}\right)^n\vol(B_x(r_1));
    \]
    \item[(ii)] there is $A>0$ such that if $S$ divides $B_x(r)$ equally, then
    \[
    \vol(B_x(r))\le Ar\,\vol(S\cap B_x(2r)).
    \]
\end{itemize}

\begin{proposition}\label{prop:appendix-A1}
If {\rm(i)} and {\rm(ii)} hold and $\diam(M)\le D$, then for all $1\le q\le \frac n{n-1}$,
\begin{equation*}%\label{eq:appendix-global-iso}
I_N^q(M)\le 10^{n/q}AC_dD.
\end{equation*}
\end{proposition}

\begin{proof}
Since $\partial M$ has measure zero, the Vitali covering argument may be carried out
using centers in $\operatorname{int}(M)$. The rest of the proof is exactly the same
as that of Proposition~A.1 in \cite{RamosSeto}.
\end{proof}

Combining Theorem~\ref{thm:appendix-vol-comp}, Lemma~\ref{lem:appendix-A3}, and Proposition~\ref{prop:appendix-A1}, we obtain a uniform bound for the Neumann isoperimetric constant.

\begin{corollary}\label{cor:appendix-uniform-iso}
Let $p>\frac n2$. There exists $\eps=\eps(n,p,K,D)>0$ such that if $\kappa(p,K)\le \eps$, then for every $1\le q\le \frac n{n-1}$,
\[
I_N^q(M)\le C(n,p,K,D).
\]
\end{corollary}

Note that when $q=1$, $I_N^1(M)$ is the reciprocal of the Cheeger constant. Therefore, using Cheeger's inequalities (see \cite{cheeger70}), we can obtain the following Neumann Poincar\'e inequality and rough lower bound for $\lambda_1$.

\begin{corollary}\label{cor:appendix-poincare}
Let $p>\frac n2$. There exists $\eps=\eps(n,p,K,D)>0$ such that if
$\kappa(p,K)\le \eps$, then there exists $C_P=C_P(n,p,K,D)>0$ such that for every
$u\in W^{1,2}(M)$,
\[
\|u-u_M\|_2^*\le C_P\|\nabla u\|_2^*.
\]
Equivalently,
\[
\lambda_1(M)\ge C_P^{-2}.
\]
\end{corollary}

Define the Neumann $q$-Sobolev constant by
\[
\SN_q(M):=
\sup_{u\in W^{1,1}(M)}
\frac{\displaystyle\left(\inf_{a\in\mathbb R}\fint_M|u-a|^q\,dv\right)^{1/q}}
{\displaystyle\fint_M|\nabla u|\,dv}.
\]

\begin{proposition}\label{prop:appendix-SN}
For every $q>0$,
\[
\min\{1,2^{1-1/q}\}I_N^q(M)
\le
\SN_q(M)
\le
\max\{1,2^{1-1/q}\}I_N^q(M).
\]
\end{proposition}

\begin{proof}
This is the same proof as Proposition~A.2 in \cite{RamosSeto}; the normalization by $\vol(M)$ cancels on both sides.
\end{proof}

The following is the averaged version of Proposition~A.3 in \cite{RamosSeto}.

\begin{proposition}\label{prop:appendix-L2-Sob}
Let $1<q<2$. There exist constants $C_1,C_2>0$, depending only on $q$, such that for every $u\in W^{1,2}(M)$,
\begin{equation*}%\label{eq:appendix-L2-sobolev-general}
\|\nabla u\|_2^{*\,2}
\ge
C_1\SN_q(M)^2
\left[
\|u\|_{\frac{2q}{2-q}}^{*\,2}
-
C_2\|u\|_2^{*\,2}
\right].
\end{equation*}
\end{proposition}

\begin{proof}
This is the same argument as Proposition~A.3 in \cite{RamosSeto}; see also \cite[Corollary~9.9]{li2012geometric}. Passing from the unnormalized to the normalized form only introduces powers of $\vol(M)$ which cancel after averaging.
\end{proof}

We are now ready to prove Proposition~\ref{prop:appendix-global-neumann-sobolev}.

\begin{proof}[Proof of Proposition~\ref{prop:appendix-global-neumann-sobolev}]
By Corollary~\ref{cor:appendix-uniform-iso} and Proposition~\ref{prop:appendix-SN}, there exists $\eps=\eps(n,p,K,D)>0$ such that if $\kappa(p,K)\le \eps$, then
\[
\SN_q(M)\le C(n,p,K,D)
\]
for every $1\le q\le \frac n{n-1}$.

Now choose
\[
q=\frac{2p}{2p-1}.
\]
Since $p>\frac n2$, we have $1<q<2$ and
\[
q=\frac{2p}{2p-1}\le \frac{n}{n-1}.
\]
Hence Proposition~\ref{prop:appendix-L2-Sob} applies for this choice of $q$. Moreover,
\[
\frac{2q}{2-q}=\frac{2p}{p-1}.
\]
Therefore Proposition~\ref{prop:appendix-L2-Sob} gives
\[
\|u\|_{\frac{2p}{p-1}}^{*\,2}
\le
C(n,p,K,D)\|\nabla u\|_2^{*\,2}
+
C'(p)\|u\|_2^{*\,2}.
\]
Applying this to $u-u_M$ and using the Neumann
Poincar\'e inequality coming from Corollary~\ref{cor:appendix-poincare}, we obtain
\[
\|u-u_M\|_{\frac{2p}{p-1}}^{*}
\le
C_S\|\nabla u\|_2^{*},
\]
which proves \eqref{eq:appendix-target-sobolev-mean-zero}.

Finally, write
\[
u=(u-u_M)+u_M.
\]
By the triangle inequality and the estimate
\[
|u_M|=\left|\fint_M u\,dv\right|\le \|u\|_2^{*},
\]
we get
\[
\|u\|_{\frac{2p}{p-1}}^{*}
\le
\|u-u_M\|_{\frac{2p}{p-1}}^{*}+\|u\|_2^{*}
\le
C_S\|\nabla u\|_2^{*}+\|u\|_2^{*},
\]
which proves \eqref{eq:appendix-target-sobolev}.
\end{proof}

\begin{remark}
The inequality \eqref{eq:appendix-target-sobolev} is the boundary analogue of the Sobolev inequality used in \cite[Proposition~2]{ramos2020zhong}. Once Proposition~\ref{prop:appendix-global-neumann-sobolev} is available, the Moser iteration argument used there carries over to the present setting.
\end{remark}

\bibliography{references}
\bibliographystyle{alpha}

\end{document}